\documentclass[12pt]{amsart}

\usepackage{amsmath,amssymb,amsthm,mathtools}
\usepackage[a4paper,margin=1.05in]{geometry}
\usepackage{enumitem}
\usepackage{hyperref}
\usepackage[nameinlink,capitalize]{cleveref}
\usepackage{microtype}

\hypersetup{colorlinks=true,linkcolor=blue,citecolor=blue,urlcolor=blue}
\newtheorem{theorem}{Theorem}[section]
\newtheorem{proposition}[theorem]{Proposition}
\newtheorem{lemma}[theorem]{Lemma}
\newtheorem{corollary}[theorem]{Corollary}

\newtheorem{remark}[theorem]{Remark}
\theoremstyle{definition}

\crefname{theorem}{Theorem}{Theorems}
\crefname{lemma}{Lemma}{Lemmas}
\crefname{corollary}{Corollary}{Corollaries}
\crefname{proposition}{Proposition}{Propositions}
\crefname{remark}{Remark}{Remarks}
\crefname{claim}{Claim}{Claims}

\Crefname{theorem}{Theorem}{Theorems}
\Crefname{lemma}{Lemma}{Lemmas}
\Crefname{corollary}{Corollary}{Corollaries}
\Crefname{proposition}{Proposition}{Propositions}
\Crefname{remark}{Remark}{Remarks}
\Crefname{claim}{Claim}{Claims}

\newcommand{\C}{\mathbb C}
\newcommand{\N}{\mathbb N}
\newcommand{\Z}{\mathbb Z}
\newcommand{\D}{\mathbb D}
\newcommand{\B}{\mathbb B}
\newcommand{\BB}{\mathcal B}

\newcommand{\dV}{\,dV}

\newcommand{\less}{\lesssim}
\newcommand{\gtr}{\gtrsim}

\newcommand{\ip}[2]{\langle #1,#2\rangle}
\newcommand{\calB}{\mathcal B}

\newcommand{\abs}[1]{\left|#1\right|}
\newcommand{\norm}[1]{\left\|#1\right\|}

\title[$L^p$-boundedness of Berezin transforms on generalized Hartogs triangles]
{$L^p$-Boundedness of the Berezin Transform on Generalized Hartogs Triangles}

\author{Qian Fu}
\address{Teaching Experiment Platform, Beijing Normal University, Zhuhai 519087, P. R. China}
\email{qianfu@bnu.edu.cn}

\subjclass[2020]{32A36; 47B35; 47G30}
\keywords{Berezin transform; Bergman kernel;  Generalized Hartogs triangle}

\begin{document}

\begin{abstract}
Let $m,l\in\N$ be relatively prime and let
\[
\Omega_{m/l}^{n+1}=\bigl\{(z,w)\in\C^n\times\C:\ \norm{z}^{m}<\abs{w}^{l}<1\bigr\}
\]
be the rational generalized Hartogs triangle of exponent $m/l$ in $\C^{n+1}$. In this paper, we study the Berezin transform $\BB_{m/l,n}$ associated with the Bergman kernel of $\Omega_{m/l}^{n+1}$, and prove that $\BB_{m/l,n}$ is bounded on $L^p(\Omega_{m/l}^{n+1})$ if and only if $p>m+nl$. 
\end{abstract}

\maketitle

\section{Introduction}

The Berezin transform is obtained by averaging a function against the squared
modulus of the normalized Bergman kernel.  It is therefore a positive integral
operator whose mapping properties reflect the way in which the Bergman kernel
blows up near the boundary.  More precisely, if $\Omega\subset\C^N$ is a bounded
domain with Bergman kernel $B_\Omega$, then its Berezin transform is given by
\begin{equation}\label{eq:intro-berezin}
  \calB_\Omega f(\zeta)
  =\int_\Omega f(\omega)\frac{|B_\Omega(\zeta,\omega)|^2}{B_\Omega(\zeta,\zeta)}\dV(\omega).
\end{equation}
The normalization in the denominator makes \eqref{eq:intro-berezin} bounded on
$L^\infty(\Omega)$ for every bounded domain.  For finite $p$, however, the
answer can change drastically from one domain to another, especially on
nonsmooth domains where the Bergman kernel has anisotropic singularities.

On smooth bounded symmetric domains and on classical homogeneous domains, the
Berezin transform has a long history in operator theory and quantization.  The
boundary behavior of the Berezin transform was used by Axler and Zheng
\cite{AxlerZheng1998} to characterize compactness phenomena on the disk, and
Engli\v{s}\cite{Englis1999} extended related ideas to bounded symmetric domains.
The $L^p$ theory on the disk and ball was studied by Dostani'c
\cite{Dostanic2008}, Markovi'c \cite{Markovic2015}, and others.  More recently,
there has been growing interest in nonsmooth pseudoconvex domains, where
Bergman-type integral operators often have restricted mapping ranges, see \cite{ChakrabartiShaw2013,ChakrabartiZeytuncu2016,Chen2017,ChristophersonKoenig2024,ChristophersonChen2025},  This is
particularly visible on Hartogs triangles and related Reinhardt domains; see, for
example, \cite{HuoWick2020,QinWuGuo2023,YangZou2024,Zou2024Note,Zou2026FR,FuDengCao2024}.  

The present paper concerns the rational generalized Hartogs triangles
\begin{equation}\label{eq:domain-intro}
   \Omega^{n+1}_{m/l}
   =\{(z,w)\in\C^n\times\C:\ \norm{z}^m<|w|^l<1\},
   \qquad (m,l)=1.
\end{equation}
They are special cases of the power-generalized domains
\[
   \Omega^{n+1}_{\gamma}
   =\{(z,w)\in\C^n\times\C:\ \norm{z}^{\gamma}<|w|<1\},
   \qquad \gamma>0,
\]
with $\gamma=m/l$. These domains are higher-dimensional generalizations of the classical Hartogs triangle \cite{EdholmMcNeal2016,EdholmMcNeal2017,GogusSahutoglu2023}.  In our previous work \cite{FuDeng2026}, we obtained the
Bergman kernel estimates and the sharp $L^p$ mapping range of the Bergman
projection on these domains.  When $m=1$ and $l=k$, the
domain \eqref{eq:domain-intro} becomes
\[
   H_k=\{(z,w)\in\C^n\times\C:\ \norm{z}<|w|^k<1\},
\]
the generalized Hartogs triangle studied by Zou \cite{Zou2025Berezin}.  Zou
proved that the Berezin transform on $H_k$ is bounded on $L^p(H_k)$ if and only
if $p>nk+1$.  His proof uses the explicit closed formula for the Bergman kernel
of $H_k$, which is available because $H_k$ is biholomorphic to
$\B_n\times\D^*$ by the holomorphic map $(z,w)\mapsto (z/w^k,w)$.

A complementary two-dimensional endpoint of the family was recently studied by
Wei \cite{Wei2025Fat}.  For the fat Hartogs triangle
\[
   \{(z_1,z_2)\in\C^2:\ |z_1|^k<|z_2|<1\},
\]
Wei proved the sharp statement that the Berezin transform is bounded precisely
for $p>k+1$.  This corresponds to the case $n=1$ and $l=1$ in
\eqref{eq:domain-intro}.

For the domains \eqref{eq:domain-intro} with $m>1$, the situation is different.
The map involving $w^{l/m}$ is not globally holomorphic, and the Bergman kernel
is no longer a single Forelli--Rudin fraction.  Fu--Deng \cite{FuDeng2026} obtained the Bergman
kernel by decomposing the Bergman space into finitely many orthogonal subspaces
according to congruence classes of the total degree.  This leads to a finite sum
of subkernels.  Their kernel estimates were used to prove the sharp
$L^p$-boundedness interval for the Bergman projection.  The same decomposition,
combined with a different normalization, is the starting point of our analysis of
the Berezin transform.

Our main result is the following sharp theorem.

\begin{theorem}\label{thm:main}
Let $B_{m/l,n}$ be the Bergman kernel of $\Omega^{n+1}_{m/l}$ and define
\begin{equation*}
   \calB_{m/l,n}f(z,w)
   =\int_{\Omega^{n+1}_{m/l}}
      \frac{|B_{m/l,n}((z,w),(\xi,\eta))|^2}{B_{m/l,n}((z,w),(z,w))}f(\xi,\eta)\dV(\xi,\eta).
\end{equation*}
If $1<p<\infty$, then
\[
   \calB_{m/l,n}:L^p(\Omega^{n+1}_{m/l})\to L^p(\Omega^{n+1}_{m/l})
\]
is bounded if and only if
\begin{equation*}
   p>m+nl.
\end{equation*}
\end{theorem}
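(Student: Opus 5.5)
The plan has two halves: sufficiency for $p>m+nl$ via a Schur-type test, and necessity for $p\le m+nl$ via an explicit test function.

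\emph{Kernel and change of variables.} I would start from the Fu--Deng decomposition of the Bergman kernel of $\Omega^{n+1}_{m/l}$ into finitely many subkernels indexed by congruence classes of the total degree, together with their kernel estimates. Writing $s=\|z\|/|w|^{l/m}\in[0,1)$ and $t=|w|\in(0,1)$, I expect
\[
B_{m/l,n}((z,w),(z,w))\approx |w|^{-2-2nl/m}(1-s^2)^{-(n+1)}.
\]
This should follow from the diagonal estimate, since locally the domain looks like $\B_n\times\D^*$ after the non-holomorphic but measure-comparable substitution $z\mapsto z/w^{l/m}$.

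\emph{Sufficiency.} The off-diagonal modulus $|B(\zeta,\omega)|$ I would bound, using the Fu--Deng estimates, by a product: a Forelli--Rudin factor $|1-\langle z\bar w^{l/m},\xi\bar\eta^{l/m}\rangle/(|w||\eta|)^{2l/m}|^{-(n+1)}$ (in suitably normalized variables) times a factor $|w\bar\eta|^{-1-nl/m}$ up to bounded multiples. The roots-of-unity ambiguity in $w^{l/m}$ is absorbed by taking the maximum over the $m$ branches, which is harmless since there are finitely many subkernels. The Berezin kernel $|B|^2/B(\zeta,\zeta)$ then splits into a ball part, which is the classical Berezin kernel of $\B_n$ and is bounded on every $L^p$ with the weight $|w|^{2nl/m}$ from the Jacobian, and a radial part in $(|w|,|\eta|)$. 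The radial part is roughly
\[
K(t,\tau)=\frac{t^{2+2nl/m}}{(t\tau)^{2+2nl/m}}=\tau^{-2-2nl/m}
\]
on the $\D^*$ factor, with measure $\tau^{1+2nl/m}\,d\tau$ times ball volume. I would run Schur's test with power test functions $h(\tau)=\tau^{-a}$ on the $\D^*$-type factor. This reduces boundedness to $\int_0^1\tau^{-2-2nl/m}\tau^{1+2nl/m}\tau^{-ap'}\,d\tau<\infty$ together with the reverse integral in $L^p$. The net behavior of the operator is $f\mapsto \int f(\eta)\,|\eta|^{-2-2nl/m}\,dV$ times bounded ball averaging, which is bounded on $L^p$ exactly when $|\eta|^{-2-2nl/m}\in L^{p'}(dV)$, i.e.\ $(2+2nl/m)p'<2+2nl/m+2$. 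This simplifies to $p>(m+nl)/1\cdot\ldots$, and I would recompute carefully to land on $p>m+nl$; the volume weight $|\eta|^{2nl/m}$ combined with the $l$-fold branching should give exactly $m+nl$.

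\emph{Necessity.} For $p\le m+nl$ I would take $f$ localized near the origin, e.g.\ $f(\xi,\eta)=|\eta|^{-\beta}\chi$ with $\beta$ chosen just so that $f\in L^p$. Then $\calB f(\zeta)\gtrsim |w|^{2+2nl/m}\int f|\eta|^{-2-2nl/m}$ on a set of positive measure, and the integral diverges precisely when $p\le m+nl$. Alternatively, testing the adjoint on a constant function shows that the constant function must map to $L^{p'}$, which fails at the endpoint.

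\emph{Main obstacle.} The hardest step is obtaining a pointwise upper bound for $|B|^2/B(\zeta,\zeta)$ from the sum of subkernels. Cancellations between subkernels must not be needed, and the lower bound used for necessity must hold on a sizable region. I would first try to bound each subkernel separately by the same majorant and use the triangle inequality. For the lower bound, I would restrict to $\eta$ near the real axis with $\xi$ near $0$, where all subkernels are positive, so that no cancellation occurs.
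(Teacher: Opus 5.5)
\textbf{There is a genuine gap.} Your overall architecture matches the paper's: Schur's test with product-type test functions after the scaling $\xi=|\eta|^{l/m}u$, and a test-function or adjoint argument for necessity. But the quantitative core is wrong, so $m+nl$ is never actually derived.

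The kernel carries $|w\overline\eta|^{-(nl+m-1)/m}$, not $|w\overline\eta|^{-1-nl/m}$, and the Berezin kernel carries $|\eta|^{-2(nl+m-1)/m}$, not $|\eta|^{-2-2nl/m}$. You can check this at $m=1$: the map $(z,w)\mapsto(z/w^k,w)$ from $H_k$ onto $\B_n\times\D^*$ has Jacobian $w^{-nk}$, which gives $|w\overline\eta|^{-nk}$. With your exponent, your own criterion $\int_0^\delta\tau^{-(2+2nl/m)q}\,\tau^{2nl/m}\,\tau\,d\tau<\infty$ (with $q=p/(p-1)$) forces $q<1$, so it predicts unboundedness for every $p$. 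The inequality you display has a spurious $+2$, and invoking ``$l$-fold branching'' to reach $m+nl$ is fitting to the answer rather than deriving it.

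Your uniform diagonal asymptotic is also false near $z=0$. There only the $\alpha=0$ monomials survive, and $B_{m/l,n}((0,w),(0,w))\approx|w|^{-2(1+\lfloor(nl-1)/m\rfloor)}$. This is strictly less singular than $|w|^{-2(nl+m-1)/m}$ unless $m\mid nl-1$. So dividing a global off-diagonal bound by a global diagonal bound leaves an uncontrolled negative power of $|w|$.

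The missing mechanism is the subkernel-by-subkernel normalization. With $\zeta=(z,w)$ and $\omega=(\xi,\eta)$, one has $B_{m/l,n}(\zeta,\zeta)\ge K_j(\zeta,\zeta)\gtrsim\norm{z}^{2j}|w|^{-2(E_j+1)}/(\cdots)$ and $|K_j(\zeta,\omega)|\lesssim|\ip{z}{\xi}|^j|w\overline\eta|^{-E_j-1}/(\cdots)$. In the quotient the powers of $|w|$ cancel, and $|\ip{z}{\xi}|^{2j}/\norm{z}^{2j}\le\norm{\xi}^{2j}\le|\eta|^{2lj/m}$ leaves $|\eta|^{-2\kappa_j}$, with $\max_j\kappa_j=\kappa_{j_0}=(nl+m-1)/m$. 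Then $p>m+nl$ is exactly the condition $-\frac{2}{mq}<-\frac{2(nl+m-1)}{mp}$ that makes the exponent $s$ of $|w|^s$ in the Schur test function admissible.

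Necessity has the same blind spot. The sharp obstruction is the single monomial $z_1^{j_0}w^{-1-E_0}$ with $l(j_0+n)\equiv1\pmod m$, which lies exactly on the edge $l|\alpha|+m\beta=-m-nl+1$ of the $L^2$ condition. To make the kernel a positive sum you restrict to $\xi$ near $0$. That kills every $\alpha\neq0$ term, including this one when $j_0\neq0$, since it carries $\overline{\xi_1}^{\,j_0}$. Only the $\alpha=0$ singularity $|\eta|^{-2(1+\lfloor(nl-1)/m\rfloor)}$ survives, which gives a non-sharp condition: for $n=1$, $l=2$, $m=3$ you would get $p>5/2$ instead of $p>5$. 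If instead you keep $\norm{\xi}\approx|\eta|^{l/m}$, the $\alpha\neq0$ terms are of the same size and your positivity claim has no basis.

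The paper needs no pointwise lower bound. On a compact Reinhardt set $U$, monomials are orthogonal in $L^2(U)$, so $\int_U|B_{m/l,n}(\cdot,\omega)|^2\,dV\ge c\,|\xi_1|^{2j_0}|\eta|^{-2(1+E_0)}$. Hence $\calB_{m/l,n}^*\chi_U\gtrsim|\xi_1|^{2j_0}|\eta|^{-2(1+E_0)}$, and membership of this function in $L^q$ near $\eta=0$ is equivalent to $q<(m+nl)/(m+nl-1)$, that is, $p>m+nl$. Your alternative of testing the adjoint on a constant can be made to work, but only with this orthogonality step and this critical monomial.
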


\begin{remark}
	When $m=1$ and $l=k$, the domain becomes
	$\{(z,w):\norm{z}<|w|^k<1\}$, and the result reduces to Zou's theorem \cite[Theorem~1.2]{Zou2025Berezin} that the Berezin transform is $L^p$-bounded exactly for $p>nk+1$. In the case $n=1$ and $l=1$, it also recovers the sharp fat-Hartogs-triangle theorem of Wei \cite[Theorems~1.1 and~1.2]{Wei2025Fat}.
\end{remark}

\begin{corollary}\label{cor:unbounded}
For $p\ge1$, the Berezin transform $\calB_{m/l,n}$ is unbounded on
$L^p(\Omega^{n+1}_{m/l})$ if and only if
\[
   1\le p\le m+nl.
\]
\end{corollary}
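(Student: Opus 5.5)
The corollary should follow directly from \cref{thm:main}, plus a separate look at the endpoint $p=1$, which the theorem does not cover since it assumes $1<p<\infty$.

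\emph{Range $1<p<\infty$.} Here \cref{thm:main} says $\calB_{m/l,n}$ is bounded exactly when $p>m+nl$. Equivalently, for $1<p<\infty$ it is unbounded exactly when $p\le m+nl$.

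\emph{Case $p=1$.} Note that $m+nl\ge 2$, so $p=1$ lies in the interval $1\le p\le m+nl$, and we must show $\calB_{m/l,n}$ is unbounded on $L^1$. The plan is a duality argument.

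\begin{enumerate}[label=(\roman*)]
\item Write the Berezin kernel as
\[
K(\zeta,\omega)=\frac{|B(\zeta,\omega)|^2}{B(\zeta,\zeta)},
\]
which is nonnegative.
\item Boundedness of a positive integral operator on $L^1$ is equivalent to
\[
\sup_{\omega}\int_\Omega K(\zeta,\omega)\,dV(\zeta)<\infty .
\]
This is the same as the formal adjoint having kernel $K(\omega,\zeta)$ and being bounded on $L^\infty$.
\item By Fubini, testing the adjoint on the constant function $1$ gives the function
\[
g(\omega)=\int_\Omega K(\zeta,\omega)\,dV(\zeta).
\]
So it suffices to show $g$ is unbounded.
\item By Riesz--Thorin, if $\calB$ were bounded on $L^1$, then interpolating with $L^\infty$ (where $\calB$ is always bounded by the normalization) would make it bounded on $L^p$ for every $1<p<\infty$. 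This contradicts the unboundedness for $p\le m+nl$ established above.
\end{enumerate}

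Step (iv) settles $p=1$ without computing $g$ explicitly. The same interpolation argument also gives monotonicity of boundedness in $p$ in general.

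\emph{Conclusion.} Combining the two ranges, for $p\ge1$ the operator $\calB_{m/l,n}$ is unbounded exactly for $1\le p\le m+nl$. The only point that takes any care is the $p=1$ endpoint, and interpolation handles it cleanly.
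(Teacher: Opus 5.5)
Your proposal is correct. For $1<p<\infty$ it does exactly what the paper does, namely read the result off \cref{thm:main}. The two arguments differ only at the endpoint $p=1$.

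\emph{The paper's route.} \cref{prop:endpoint} treats $p=1$ directly. Boundedness on $L^1$ would make $\calB_{m/l,n}^*$ bounded on $L^\infty$. \cref{lem:adjoint-lower} then gives $\calB_{m/l,n}^*\chi_U(\xi,\eta)\ge c\,|\xi_1|^{2j_0}|\eta|^{-2(1+E_0)}$. This lower bound blows up along $\xi=(c_1|\eta|^{l/m},0,\ldots,0)$ as $\eta\to0$, since $2(1+E_0)-2lj_0/m=2(m+nl-1)/m>0$.

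\emph{Your route.} You interpolate the hypothetical $L^1$ bound against the automatic $L^\infty$ bound, which holds because $\calB_{m/l,n}1=1$ by the reproducing property. This yields boundedness at $p=m+nl\in(1,\infty)$, contradicting the theorem. Riesz--Thorin is not even needed, because the kernel is positive: your step (ii) combined with \cref{lem:schur} for $h\equiv1$ gives the interpolation directly.

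\emph{What each buys.} Your argument is soft and short. It also shows in general that the set of $p$ where $\calB_{m/l,n}$ is bounded is an up-interval, so the $p=1$ case follows formally from failure at any single $p>1$. The cost is that it depends on \cref{prop:necessity}. The paper's argument is self-contained and exhibits the explicit obstruction, the critical monomial $z_1^{j_0}w^{-1-E_0}$.

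Your step (iii) is never actually used. Had you carried it out, it would reduce to the paper's argument, since $\calB_{m/l,n}^*1\ge\calB_{m/l,n}^*\chi_U$.
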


We shall use the following standard notation throughout the paper. If $A$ and
$B$ are functions depending on several variables, we write $A\lesssim B$ to signify that there exists a
constant $K>0$, independent of the relevant variables, such that $A\leqslant K B$. The independence
of which variables will be clear in context. We also write $A\approx B$ to mean that $A\lesssim B\lesssim A$. If $x\in\mathbb R$, $\lfloor x\rfloor$ will denote the greatest integer not exceeding $x$. We write
$U\Subset\Omega$ to mean that $U$ is compactly contained in $\Omega$.

\section{Bergman kernels on rational generalized Hartogs triangles}

\subsection{The monomial expansion}

For $\alpha=(\alpha_1,\ldots,\alpha_n)\in\N^n$ and $\beta\in\Z$, the monomial
$z^\alpha w^\beta$ belongs to $A^2(\Omega^{n+1}_{m/l})$ if and only if
\begin{equation}\label{eq:monomial-condition}
   |\alpha|+\frac{m}{l}(\beta+1)>-n.
\end{equation}
Equivalently,
\begin{equation}\label{eq:integer-monomial-condition}
   l|\alpha|+m\beta\ge -m-nl+1.
\end{equation}
Indeed, by polar coordinates,
\begin{align*}
 \norm{z^\alpha w^\beta}_{L^2(\Omega^{n+1}_{m/l})}^2
 &=\int_{0<|w|<1}|w|^{2\beta}
     \int_{\norm{z}<|w|^{l/m}} |z^\alpha|^2\dV(z)\dV(w) \notag\\
 &= C_{n,\alpha}\int_0^1 r^{2\beta+\frac{2l}{m}(|\alpha|+n)+1}\,dr,
\end{align*}
which is finite exactly when \eqref{eq:monomial-condition} holds.
Consequently the Bergman kernel has the orthogonal monomial expansion
\begin{equation}\label{eq:full-kernel-expansion}
   B_{m/l,n}((z,w),(\xi,\eta))
   =\sum_{(\alpha,\beta)\in\Lambda_{m/l,n}}
     \frac{z^\alpha w^\beta\overline{\xi^\alpha\eta^\beta}}
     {\norm{z^\alpha w^\beta}_{L^2(\Omega^{n+1}_{m/l})}^2},
\end{equation}
where
\begin{equation*}
   \Lambda_{m/l,n}
   =\{(\alpha,\beta)\in\N^n\times\Z:\ l|\alpha|+m\beta\ge -m-nl+1\}.
\end{equation*}

We shall use the following finite congruence-class decomposition.  For
$j\in\{0,1,\ldots,m-1\}$, let $\mathcal H_j$ be the closed subspace of
$A^2(\Omega^{n+1}_{m/l})$ spanned by the monomials $z^\alpha w^\beta$ satisfying
$|\alpha|\equiv j\pmod m$.  Then
\begin{equation*}
   A^2(\Omega^{n+1}_{m/l})=\mathcal H_0\oplus\mathcal H_1\oplus\cdots\oplus\mathcal H_{m-1}.
\end{equation*}
Let $K_j$ denote the reproducing kernel of $\mathcal H_j$.  Then
\begin{equation}\label{eq:kernel-decomp}
   B_{m/l,n}((z,w),(\xi,\eta))=\sum_{j=0}^{m-1} K_j((z,w),(\xi,\eta)).
\end{equation}

For each $j$ set
\begin{equation*}
   E_j=\left\lfloor\frac{(j+n)l-1}{m}\right\rfloor,
   \qquad
   \kappa_j=E_j+1-\frac{lj}{m}.
\end{equation*}
Then
\begin{equation*}
   \kappa_j\le \frac{nl+m-1}{m},
\end{equation*}
and equality holds for the unique $j=j_0$ satisfying
\begin{equation*}
   l(j_0+n)\equiv1\pmod m.
\end{equation*}

\subsection{Bergman kernel estimates}

We shall use the following estimates for the Bergman kernel and for the normalized
Berezin kernel.  

\begin{lemma}\label{lem:bergman-kernel-estimates}
Let $B_{m/l,n}$ be the Bergman kernel of $\Omega^{n+1}_{m/l}$.  For all
$(z,w),(\xi,\eta)\in\Omega^{n+1}_{m/l}$,
\begin{equation}\label{eq:full-kernel-upper}
 |B_{m/l,n}((z,w),(\xi,\eta))|
 \less
 \frac{|w\overline\eta|^{-\frac{nl+m-1}{m}}}
 {|1-w\overline\eta|^2
 \left|1-\frac{\ip{z}{\xi}^m}{(w\overline\eta)^l}\right|^{n+1}}.
\end{equation}
Moreover, the Berezin kernel satisfies
\begin{equation}\label{eq:berezin-kernel-upper}
 \frac{|B_{m/l,n}((z,w),(\xi,\eta))|^2}{B_{m/l,n}((z,w),(z,w))}
 \less
 \frac{\left(1-\frac{\norm{z}^{2m}}{|w|^{2l}}\right)^{n+1}(1-|w|^2)^2
 |\eta|^{-2\frac{nl+m-1}{m}}}
 {|1-w\overline\eta|^4
 \left|1-\frac{\ip{z}{\xi}^m}{(w\overline\eta)^l}\right|^{2(n+1)}}.
\end{equation}
\end{lemma}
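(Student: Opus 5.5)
We compute each subkernel $K_j$ explicitly, bound it term by term to get \eqref{eq:full-kernel-upper}, and then combine that upper bound with a matching lower bound for the diagonal $B_{m/l,n}((z,w),(z,w))$ to get \eqref{eq:berezin-kernel-upper}.

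\emph{Step 1: closed form for $K_j$.} For $|\alpha|=mk+j$, the condition \eqref{eq:integer-monomial-condition} reads $m\beta\ge -m-nl+1-l(mk+j)$. This is equivalent to $\beta\ge -lk-E_j-1$, where $E_j$ is exactly the quantity defined before the lemma. Put $\beta=-lk-E_j-1+s$ with $s\ge0$. By polar coordinates,
\[
\norm{z^\alpha w^\beta}^2=\frac{\pi^{n+1}\alpha!}{(|\alpha|+n-1)!}\cdot\frac{1}{\beta+1+\frac{l}{m}(|\alpha|+n)},
\]
and here $\beta+1+\frac lm(|\alpha|+n)=s+\frac{nl}{m}-\kappa_j+1>0$. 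Summing over $|\alpha|=mk+j$ with the multinomial theorem produces $\ip{z}{\xi}^{mk+j}$. Factoring out $(w\bar\eta)^{-E_j-1}$ gives
\[
K_j=c\,\frac{\ip z\xi^{j}}{(w\bar\eta)^{E_j+1}}\sum_{k,s\ge0}\frac{(mk+j+n-1)!}{(mk+j)!}\Bigl(s+c_j\Bigr)\Bigl(\frac{\ip z\xi^m}{(w\bar\eta)^l}\Bigr)^{k}(w\bar\eta)^s,
\]
where $c_j=\frac{nl}{m}-\kappa_j+1>0$. The double sum splits as a product of a sum in $s$ and a sum in $k$. The $s$-sum is $\approx(1-w\bar\eta)^{-2}$ in absolute value. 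In the $k$-sum the coefficient $(mk+j+n-1)!/(mk+j)!$ is a polynomial in $k$ of degree $n-1$. Writing it through the $m$-th roots-of-unity filter of $\sum_N N^{n-1}x^N$, the $k$-sum is bounded by $\sum_{\omega^m=1}|1-\omega x^{1/m}|^{-n}$ with $x=\ip z\xi^m/(w\bar\eta)^l$. Since $|x|<1$ and $\prod_\omega(1-\omega y)=1-y^m$, each factor satisfies $|1-\omega y|\gtrsim|1-y^m|$. This gives the crude bound $|1-x|^{-(n+1)}$ (indeed $|1-x|^{-n}$ suffices).

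\emph{Step 2: assemble \eqref{eq:full-kernel-upper}.} Write $|\ip z\xi|^j=|x|^{j/m}|w\bar\eta|^{lj/m}$ with $|x|\le1$. Then the prefactor of $K_j$ is bounded by $|w\bar\eta|^{lj/m-E_j-1}=|w\bar\eta|^{-\kappa_j}$. Since $\kappa_j\le (nl+m-1)/m$ and $|w\bar\eta|<1$, each $K_j$ is bounded by the right side of \eqref{eq:full-kernel-upper}. Summing over the finitely many $j$ finishes this part.

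\emph{Step 3: diagonal lower bound.} Every term in the series for $K_j((z,w),(z,w))$ is nonnegative, so $B\ge K_{j_0}$. For $K_{j_0}$, set $t=\norm z^{2m}/|w|^{2l}$. Then
\[
K_{j_0}(\zeta,\zeta)\gtrsim |w|^{-2\frac{nl+m-1}{m}}\,t^{j_0/m}\,(1-|w|^2)^{-2}(1-t)^{-n}.
\]
The factor $t^{j_0/m}$ degenerates as $t\to0$. To handle this, also use $K_0$ (or the $j$ with $\ip z\xi^j$ replaced by the $k=0$ terms) to get a bound of size $|w|^{-2\kappa_0}(1-|w|^2)^{-2}$ near $t=0$. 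Where $t$ is small and $|w|$ is small, however, $K_{j_0}$ must dominate. Near $t=0$ one can instead use the term $k=0$, $s=0$, $|\alpha|=j_0$ directly: it contributes at least $\norm z^{2j_0}|w|^{-2(E_{j_0}+1)}$. Combined with the $j=0$ term, this should still cover the region. I expect making this lower bound uniform across all regimes of $t$ and $|w|$ to be the \textbf{main obstacle}, and the power $(1-t)^{-(n+1)}$ in \eqref{eq:berezin-kernel-upper} suggests the intended lower bound is
\[
B(\zeta,\zeta)\gtrsim |w|^{-2\frac{nl+m-1}{m}}(1-|w|^2)^{-2}(1-t)^{-(n+1)}.
\]
If only the exponent $n$ comes out of the $k$-sum, I would rederive Step 1 more carefully, tracking the degree-$n$ growth coming from the $s$-weight coupling. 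Given the lower bound, \eqref{eq:berezin-kernel-upper} follows by squaring \eqref{eq:full-kernel-upper} and dividing. The factor $|w|^{-2\frac{nl+m-1}{m}}$ cancels against one power of $|w|$ in the square, leaving $|\eta|^{-2\frac{nl+m-1}{m}}$, and the remaining factors rearrange into the stated form.
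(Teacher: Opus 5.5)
Your Steps 1--2 give a self-contained derivation of \eqref{eq:full-kernel-upper}, modulo the small repairs below; the paper simply cites Fu--Deng for it. Step 3, however, has a genuine gap, and a better lower bound cannot close it.

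\emph{The uniform lower bound is false.} The bound you aim for, $B_{m/l,n}(\zeta,\zeta)\gtr|w|^{-2\frac{nl+m-1}{m}}(1-|w|^2)^{-2}(1-t)^{-(n+1)}$, fails whenever $j_0\neq0$, i.e.\ whenever $nl\not\equiv1\pmod m$. At $z=0$, every $K_j$ with $j\ge1$ vanishes on the diagonal, because all its monomials carry $z^\alpha$ with $|\alpha|\ge j\ge1$. Hence $B_{m/l,n}((0,w),(0,w))=K_0((0,w),(0,w))\approx|w|^{-2\kappa_0}(1-|w|^2)^{-2}$, with $\kappa_0=\lfloor (nl-1)/m\rfloor+1<\frac{nl+m-1}{m}$. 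For $m=2$, $l=1$, $n=2$ this gives $|w|^{-2}$ against the required $|w|^{-3}$.

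\emph{The squaring strategy cannot work.} Squaring \eqref{eq:full-kernel-upper} and dividing by the true diagonal overshoots the right side of \eqref{eq:berezin-kernel-upper} at $z=0$ by the factor $|w|^{-2(\frac{nl+m-1}{m}-\kappa_0)}$. This is unbounded as $w\to0$. The cause is that \eqref{eq:full-kernel-upper} has already discarded the factor $|\ip{z}{\xi}|^j$ in $K_j$. That factor is exactly what compensates the degeneracy $\norm{z}^{2j}$ of $K_j$ on the diagonal. Your route does work when $j_0=0$ (e.g.\ $m=1$, or $n=l=1$), which is why it agrees with Zou's and Wei's cases.

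\emph{The missing idea is to pair each subkernel with its own diagonal.} Keep your Step 1 bound in the form
$|K_j|\less|\ip{z}{\xi}|^j|w\overline\eta|^{-E_j-1}|1-w\overline\eta|^{-2}|1-x|^{-(n+1)}$.
Prove the per-subkernel lower bound
$K_j(\zeta,\zeta)\gtr\norm{z}^{2j}|w|^{-2(E_j+1)}(1-|w|^2)^{-2}(1-t)^{-(n+1)}$.
Then use $|B_{m/l,n}|^2\le m\sum_j|K_j|^2$, and divide each $|K_j|^2$ separately using $B_{m/l,n}(\zeta,\zeta)\ge K_j(\zeta,\zeta)$. The factors $|w|^{-2(E_j+1)}$ cancel. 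The inequality $|\ip{z}{\xi}|^{2j}/\norm{z}^{2j}\le\norm{\xi}^{2j}\le|\eta|^{2lj/m}$ moves the degeneracy onto $\eta$, leaving $|\eta|^{-2\kappa_j}\le|\eta|^{-2\frac{nl+m-1}{m}}$. This is why the final estimate is asymmetric in $w$ and $\eta$.

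\emph{Smaller repairs.}
\begin{enumerate}
\item Your norm formula is off: $\int_{\B_n}|u^\alpha|^2\dV(u)=\pi^n\alpha!/(|\alpha|+n)!$, not with $(|\alpha|+n-1)!$. So the $k$-coefficient is $(mk+j+n)!/(mk+j)!$, of degree $n$ in $k$. This yields the exponent $n+1$ in the upper bound. On the diagonal, since all terms are positive, it also gives $\sum_k(k+1)^nt^k\approx(1-t)^{-(n+1)}$, which settles your $n$ versus $n+1$ worry.
\item The roots-of-unity filter introduces a factor $x^{-j/m}$, so treat $|x|\le 1/2$ separately.
\item The $s$-sum equals $\bigl(c_j+(1-c_j)w\overline\eta\bigr)(1-w\overline\eta)^{-2}$, which off the diagonal is only bounded above, not $\approx$. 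For $j=j_0$ one has $c_{j_0}=1/m$, so the numerator vanishes at $w\overline\eta=-1/(m-1)$ when $m\ge3$. That upper bound, together with $c_j+(1-c_j)|w|^2\ge\min(c_j,1)$ on the diagonal, is all you need.
\end{enumerate}
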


\begin{proof}
The estimate \eqref{eq:full-kernel-upper} is exactly the full Bergman kernel
estimate in \cite[Corollary~2.4]{FuDeng2026}, after writing
\[
|(w\overline\eta)^l-\ip{z}{\xi}^m|^{n+1}
=|w\overline\eta|^{l(n+1)}
\left|1-\frac{\ip{z}{\xi}^m}{(w\overline\eta)^l}\right|^{n+1}.
\]
It remains to prove \eqref{eq:berezin-kernel-upper}.  Let
$B_{m/l,n}=\sum_{j=0}^{m-1}K_j$ be the Fu--Deng decomposition into sub-Bergman
kernels \cite{FuDeng2026}.  From formula (2.16) in the proof of \cite[Theorem~2.3]{FuDeng2026},
after polarization and factoring out $|w\overline\eta|^{l(n+1)}$, we have
\begin{equation}\label{eq:subkernel-estimate-for-proof}
	|K_j((z,w),(\xi,\eta))|
	\less
	\frac{|\ip{z}{\xi}|^j |w\overline\eta|^{-E_j-1}}
	{|1-w\overline\eta|^2
		\left|1-\frac{\ip{z}{\xi}^m}{(w\overline\eta)^l}\right|^{n+1}}.
\end{equation}
On the diagonal we also need a lower bound.  Put
\[
a=\norm{z}^2,\qquad b=|w|^2,\qquad u=ab^{-l/m}.
\]
Then $0<u<1$ on $\Omega^{n+1}_{m/l}$.  By formula (2.15) in the proof of
Fu--Deng's subkernel estimate \cite[Theorem~2.3]{FuDeng2026}, the diagonal
value of the $j$-th subkernel is written as
\[
K_j((z,w),(z,w))
=
\frac{l}{m\pi^{n+1}}\,g_j(b)\,
\frac{b^{\frac{lj}{m}-1-E_j}}{(1-b)^2}
\frac{d^n}{du^n}\left(\frac{u^{j+n}}{1-u^m}\right),
\]
where
\[
g_j(b)=j+n-\frac{m}{l}E_j+
\left(\frac{m}{l}+\frac{m}{l}E_j-j-n\right)b.
\]
In particular,
\[
g_j(0)=j+n-\frac{m}{l}E_j>0,
\qquad
g_j(1)=\frac{m}{l}>0.
\]
Hence $g_j(b)\gtrsim1$ for $0<b<1$.

It remains to estimate the derivative term from below.  Since
\[
        \frac{u^{j+n}}{1-u^m}
        =\sum_{k=0}^{\infty}u^{j+n+mk},\qquad 0<u<1,
\]
we have
\[
\frac{d^n}{du^n}\left(\frac{u^{j+n}}{1-u^m}\right)
 =
 \sum_{k=0}^{\infty}
 (j+n+mk)(j+n+mk-1)\cdots(j+mk+1)u^{j+mk}.
\]
All coefficients in this series are positive, and
\[
 (j+n+mk)(j+n+mk-1)\cdots(j+mk+1)\gtrsim(k+1)^n.
\]
Consequently,
\[
\frac{d^n}{du^n}\left(\frac{u^{j+n}}{1-u^m}\right)
 \gtrsim
 u^j\sum_{k=0}^{\infty}(k+1)^n(u^m)^k
 \approx
 \frac{u^j}{(1-u^m)^{n+1}}.
\]
Substituting this estimate into the diagonal formula gives
\[
K_j((z,w),(z,w))
 \gtrsim
 \frac{b^{lj/m-1-E_j}}{(1-b)^2}
 \frac{u^j}{(1-u^m)^{n+1}}.
\]
Since $u=ab^{-l/m}$, we get
\[
        b^{lj/m-1-E_j}u^j
        =a^j b^{-1-E_j}.
\]
Thus
\[
K_j((z,w),(z,w))
 \gtrsim
 \frac{a^j b^{-1-E_j}}{(1-b)^2(1-u^m)^{n+1}}.
\]
Returning to $a=\norm{z}^2$, $b=|w|^2$, and
$u^m=\norm{z}^{2m}/|w|^{2l}$, we obtain
\begin{equation}\label{eq:subkernel-diagonal-proof}
 K_j((z,w),(z,w))
 \gtr
 \frac{\norm{z}^{2j}|w|^{-2(E_j+1)}}
 {(1-|w|^2)^2\left(1-\frac{\norm{z}^{2m}}{|w|^{2l}}\right)^{n+1}},
\end{equation}
Since $B_{m/l,n}((z,w),(z,w))\ge K_j((z,w),(z,w))$, estimates
\eqref{eq:subkernel-estimate-for-proof} and \eqref{eq:subkernel-diagonal-proof}
give
\begin{align*}
 \frac{|K_j((z,w),(\xi,\eta))|^2}{B_{m/l,n}((z,w),(z,w))}
 &\less
 \frac{\left(1-\frac{\norm{z}^{2m}}{|w|^{2l}}\right)^{n+1}(1-|w|^2)^2}
 {|1-w\overline\eta|^4
 \left|1-\frac{\ip{z}{\xi}^m}{(w\overline\eta)^l}\right|^{2(n+1)}}       \\
 &\qquad\times
 \frac{|\ip{z}{\xi}|^{2j}}{\norm{z}^{2j}} |\eta|^{-2(E_j+1)}.
\end{align*}
If $z\ne0$, then
\[
   \frac{|\ip{z}{\xi}|^{2j}}{\norm{z}^{2j}}
   \le \norm{\xi}^{2j}
   \le |\eta|^{2lj/m},
\]
and the same conclusion is immediate for $z=0$ by continuity.  Hence
\begin{equation*}
 \frac{|K_j((z,w),(\xi,\eta))|^2}{B_{m/l,n}((z,w),(z,w))}
 \less
 \frac{\left(1-\frac{\norm{z}^{2m}}{|w|^{2l}}\right)^{n+1}(1-|w|^2)^2
 |\eta|^{-2\kappa_j}}
 {|1-w\overline\eta|^4
 \left|1-\frac{\ip{z}{\xi}^m}{(w\overline\eta)^l}\right|^{2(n+1)}},
\end{equation*}
where $\kappa_j=E_j+1-\frac{lj}{m}$. Since $\kappa_j\le (nl+m-1)/m$ and $0<|\eta|<1$, we may replace
$|\eta|^{-2\kappa_j}$ by $|\eta|^{-2(nl+m-1)/m}$.  Finally, by the decomposition \(B_{m/l,n}=\sum_{j=0}^{m-1}K_j\) and the Cauchy--Schwarz inequality for finite sums, we have
\[
 |B_{m/l,n}((z,w),(\xi,\eta))|^2
 \le m\sum_{j=0}^{m-1}|K_j((z,w),(\xi,\eta))|^2,
\]
and \eqref{eq:berezin-kernel-upper} follows.
\end{proof}

\section{Integral estimates and Schur functions}

\begin{lemma}\label{lem:ball-estimate}
Let $\alpha>-1$ and $\tau>0$.  Then for $a\in\B_n$,
\begin{equation}\label{eq:ball-estimate}
   I_{\alpha,\tau}(a)
   =\int_{\B_n}\frac{(1-\norm{u}^{2m})^\alpha}
   {|1-\ip{a}{u}^m|^{n+1+\alpha+\tau}}\dV(u)
   \less
   \bigl(1-\norm{a}^{2m}\bigr)^{-\tau}.
\end{equation}
In particular, if $\Gamma>n+1+\alpha$, then
\begin{equation}\label{eq:ball-estimate-gamma}
   \int_{\B_n}\frac{(1-\norm{u}^{2m})^\alpha}
   {|1-\ip{a}{u}^m|^{\Gamma}}\dV(u)
   \less
   \bigl(1-\norm{a}^{2m}\bigr)^{\alpha+n+1-\Gamma}.
\end{equation}
\end{lemma}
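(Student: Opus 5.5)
Throughout, $m$ is the fixed positive integer of the paper, and $\alpha>-1$, $\tau>0$. The plan is to reduce \eqref{eq:ball-estimate} to the classical Forelli--Rudin estimate on $\B_n$. We expand the denominator in a power series and use orthogonality of homogeneous polynomials, which turns the integral into a one-variable coefficient comparison.

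\emph{Step 1: series expansion.} Put $s=n+1+\alpha+\tau$. For $|\ip{a}{u}|<1$,
\[
(1-\ip{a}{u}^m)^{-s/2}=\sum_{k\ge0}\frac{\Gamma(k+s/2)}{k!\,\Gamma(s/2)}\ip{a}{u}^{mk}.
\]
Then
\[
|1-\ip{a}{u}^m|^{-s}=\bigl|(1-\ip{a}{u}^m)^{-s/2}\bigr|^2 .
\]
The terms $\ip{a}{u}^{mk}$ are homogeneous of degree $mk$ in $u$, so they are mutually orthogonal in $L^2(\B_n,(1-\norm{u}^{2m})^\alpha dV)$, since this weight is radial. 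Hence
\[
I_{\alpha,\tau}(a)=\sum_{k\ge0}\Bigl(\frac{\Gamma(k+s/2)}{k!\,\Gamma(s/2)}\Bigr)^2\int_{\B_n}|\ip{a}{u}|^{2mk}(1-\norm{u}^{2m})^\alpha\,dV(u).
\]
The interchange of sum and integral is justified by monotone convergence applied to the partial sums.

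\emph{Step 2: the integrals.} In polar coordinates $u=r\zeta$,
\[
\int_{\mathbb S}|\ip{a}{\zeta}|^{2N}d\sigma=\frac{N!\,(n-1)!}{(N+n-1)!}\norm{a}^{2N}\approx (N+1)^{-(n-1)}\norm{a}^{2N}.
\]
The radial factor is
\[
\int_0^1 r^{2N+2n-1}(1-r^{2m})^\alpha dr .
\]
Substituting $t=r^{2m}$ turns it into a Beta integral, which behaves like $(N+1)^{-1-\alpha}$ for fixed $m$. Take $N=mk$. The $k$-th term is then comparable to
\[
(k+1)^{s-2}(k+1)^{-(n-1)}(k+1)^{-1-\alpha}\norm{a}^{2mk}=(k+1)^{\tau-1}\norm{a}^{2mk},
\]
with constants uniform in $k$; Stirling's formula gives the gamma ratio as $\approx (k+1)^{s/2-1}$.

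\emph{Step 3: summation.} Since $\tau>0$,
\[
\sum_{k\ge0}(k+1)^{\tau-1}x^k\approx(1-x)^{-\tau}\quad\text{for } x=\norm{a}^{2m}\in[0,1).
\]
This gives \eqref{eq:ball-estimate}, in fact as a two-sided estimate. For \eqref{eq:ball-estimate-gamma}, set $\tau=\Gamma-(n+1+\alpha)>0$ and apply \eqref{eq:ball-estimate}.

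\emph{Main obstacle.} The only point needing care is the uniformity of the asymptotics in Step 2. Both the Beta integral and the sphere integral must be bounded by a power of $(k+1)$ with constants independent of $k$; this follows from $B(x,y)\approx x^{-y}$ for fixed $y$ and large $x$. The factor $m$ in the exponents changes only these constants.
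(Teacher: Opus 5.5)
Your proposal is correct and follows essentially the same route as the paper: the binomial expansion of $(1-\ip{a}{u}^m)^{-s/2}$, orthogonality of the terms of distinct degree under the radial weight, the sphere moment formula (the paper applies it after rotating $a$ to $\norm{a}e_1$), the Beta integral via $t=r^{2m}$, Stirling's formula, and the bound $\sum_k (k+1)^{\tau-1}x^k\lesssim(1-x)^{-\tau}$. One small correction: justify the interchange of sum and integral by the uniform convergence of the series on $\B_n$ (since $|\ip{a}{u}|\le\norm{a}<1$) together with integrability of $(1-\norm{u}^{2m})^\alpha$; monotone convergence does not apply directly, because the squared moduli of the partial sums are not pointwise increasing.
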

\begin{proof}
By unitary invariance of the unit ball and Lebesgue
measure, we may assume that
\[
        a=\norm{a}e_1,\qquad e_1=(1,0,\ldots,0).
\]
Put $r=\norm{a}$ and
\[
        \lambda=\frac{n+1+\alpha+\tau}{2}>0.
\]
For $|\zeta|<1$, the binomial expansion gives
\[
        \frac{1}{(1-\zeta)^\lambda}
        =
        \sum_{k=0}^{\infty}
        \frac{\Gamma(k+\lambda)}{\Gamma(\lambda)\Gamma(k+1)}
        \zeta^k .
\]
Since the mixed terms vanish after integration over the angular variable, we obtain
\begin{align*}
 I_{\alpha,\tau}(a)
 &=
 \int_{\B_n}
 \frac{(1-\norm{u}^{2m})^\alpha}
 {|1-(r u_1)^m|^{2\lambda}}\dV(u) \\
 &=
 \sum_{k=0}^{\infty}
 \left(
 \frac{\Gamma(k+\lambda)}{\Gamma(\lambda)\Gamma(k+1)}
 \right)^2
 r^{2mk}
 \int_{\B_n}(1-\norm{u}^{2m})^\alpha |u_1|^{2mk}\dV(u).
\end{align*}
It remains to estimate the last integral.  Using polar coordinates
$u=\rho\zeta$, $\zeta\in\partial\B_n$, and the standard sphere integral (see, for example, \cite{Rudin1980,Zhu2005}),
\[
        \int_{\partial\B_n}|\zeta_1|^{2mk}\,d\sigma(\zeta)
        =
        C_n\frac{\Gamma(mk+1)}{\Gamma(n+mk)},
\]
we get
\begin{align*}
&\int_{\B_n}(1-\norm{u}^{2m})^\alpha |u_1|^{2mk}\dV(u)  \\
&\quad =
 C_n
 \frac{\Gamma(mk+1)}{\Gamma(n+mk)}
 \int_0^1(1-\rho^{2m})^\alpha \rho^{2n+2mk-1}\,d\rho \\
&\quad =
 C_{m,n}
 \frac{\Gamma(mk+1)}{\Gamma(n+mk)}
 \int_0^1(1-t)^\alpha t^{k+\frac{n}{m}-1}\,dt \\
&\quad =
 C_{\alpha,m,n}
 \frac{\Gamma(mk+1)}{\Gamma(n+mk)}
 \frac{\Gamma(k+\frac{n}{m})}
 {\Gamma(k+\frac{n}{m}+\alpha+1)} .
\end{align*}
By Stirling's formula,
\[
        \frac{\Gamma(k+\lambda)}{\Gamma(k+1)}\approx (k+1)^{\lambda-1},
        \qquad
        \frac{\Gamma(k+\frac{n}{m})}
        {\Gamma(k+\frac{n}{m}+\alpha+1)}
        \approx (k+1)^{-\alpha-1},
\]
and
\[
        \frac{\Gamma(mk+1)}{\Gamma(n+mk)}\approx (k+1)^{1-n}.
\]
Hence the coefficient of $r^{2mk}$ in the preceding series is bounded by a
constant multiple of
\[
        (k+1)^{2\lambda-2}(k+1)^{-\alpha-1}(k+1)^{1-n}
        =
        (k+1)^{\tau-1}.
\]
Consequently,
\[
        I_{\alpha,\tau}(a)
        \less
        \sum_{k=0}^{\infty}(k+1)^{\tau-1}r^{2mk}.
\]
Since $\tau>0$, the standard power-series estimate gives
\[
        \sum_{k=0}^{\infty}(k+1)^{\tau-1}x^k
        \less
        (1-x)^{-\tau},
        \qquad 0\le x<1.
\]
Taking $x=r^{2m}=\norm{a}^{2m}$, we obtain
\[
        I_{\alpha,\tau}(a)
        \less
        (1-\norm{a}^{2m})^{-\tau},
\]
which proves \eqref{eq:ball-estimate}.

Finally, if $\Gamma>n+1+\alpha$, then taking
$\tau=\Gamma-n-1-\alpha>0$ in \eqref{eq:ball-estimate} gives
\[
   \int_{\B_n}\frac{(1-\norm{u}^{2m})^\alpha}
   {|1-\ip{a}{u}^m|^{\Gamma}}\dV(u)
   \less
   \bigl(1-\norm{a}^{2m}\bigr)^{\alpha+n+1-\Gamma}.
\]
This proves \eqref{eq:ball-estimate-gamma}.
\end{proof}

The second estimate is the disk estimate used in Schur's test.

\begin{lemma}\cite[Lemma~2.2]{Zou2025Berezin}\label{lem:disk-estimate}
Let $\alpha>-1$, $\beta>-2$, and $\Gamma>\alpha+2$.  Then
\begin{equation*}
   \int_{\D^*}\frac{(1-|\eta|^2)^\alpha |\eta|^\beta}
   {|1-w\overline\eta|^{\Gamma}}\dV(\eta)
   \less (1-|w|^2)^{\alpha+2-\Gamma},
   \qquad w\in\D^*.
\end{equation*}
The implicit constant is independent of $w$.
\end{lemma}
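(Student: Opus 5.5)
The statement to prove is Lemma~\ref{lem:disk-estimate}: for $\alpha>-1$, $\beta>-2$, $\Gamma>\alpha+2$,
\[
\int_{\D^*}\frac{(1-|\eta|^2)^\alpha|\eta|^\beta}{|1-w\overline\eta|^\Gamma}\dV(\eta)\less(1-|w|^2)^{\alpha+2-\Gamma}.
\]
The plan is to split the punctured disk into an inner region $0<|\eta|<\tfrac12$ and an outer annulus $\tfrac12\le|\eta|<1$. On each piece, one factor of the integrand is harmless.

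\textbf{Inner region.} For $|\eta|<\tfrac12$ and $w\in\D^*$ we have $|1-w\overline\eta|\ge \tfrac12$, and also $(1-|\eta|^2)^\alpha\approx1$. So this piece is bounded by a constant multiple of
\[
\int_{|\eta|<1/2}|\eta|^\beta\dV(\eta)=2\pi\int_0^{1/2}r^{\beta+1}\,dr,
\]
which is finite exactly because $\beta>-2$. Since $\alpha+2-\Gamma<0$, the right-hand side $(1-|w|^2)^{\alpha+2-\Gamma}$ is bounded below by $1$ for all $w\in\D$. Hence a constant is dominated by it.

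\textbf{Outer annulus.} Here $|\eta|^\beta\approx1$ regardless of the sign of $\beta$. So this piece is bounded by the classical Forelli--Rudin integral on the whole disk,
\[
\int_{\D}\frac{(1-|\eta|^2)^\alpha}{|1-w\overline\eta|^{\Gamma}}\dV(\eta).
\]
This is the case $n=1$, $m=1$ of Lemma~\ref{lem:ball-estimate}, applied via \eqref{eq:ball-estimate-gamma} with $\Gamma>2+\alpha=n+1+\alpha$. It gives the bound $(1-|w|^2)^{\alpha+2-\Gamma}$. One could equally expand $|1-w\overline\eta|^{-\Gamma}$ in a binomial series and use Stirling's formula, exactly as in the proof of Lemma~\ref{lem:ball-estimate}.

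\textbf{Conclusion.} Adding the two pieces gives the claim, with a constant depending only on $\alpha$, $\beta$ and $\Gamma$, and not on $w$.

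The only point needing care is the handling of the singular weight $|\eta|^\beta$ when $-2<\beta<0$, and its unboundedness near the boundary when $\beta$ is large. The splitting isolates both issues, so there is no genuine obstacle: the result reduces to the standard disk estimate plus a local integrability check at the origin.
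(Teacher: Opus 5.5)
Your proof is correct. It necessarily differs from the paper, which gives no argument of its own and simply imports the estimate from Zou~\cite[Lemma~2.2]{Zou2025Berezin}.

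Splitting at $|\eta|=\tfrac12$ cleanly separates the two sources of singularity:
\begin{itemize}
\item Near the puncture the kernel is bounded, because $|1-w\overline\eta|\ge\tfrac12$ and $\Gamma>\alpha+2>0$. The hypothesis $\beta>-2$ is used exactly as local integrability of $|\eta|^\beta$, and the resulting constant is dominated by $(1-|w|^2)^{\alpha+2-\Gamma}\ge1$.
\item On the annulus, $|\eta|^\beta\le\max(1,2^{-\beta})$, so the full-disk Forelli--Rudin bound applies. This is the case $n=m=1$ of \eqref{eq:ball-estimate-gamma}, and it uses precisely $\alpha>-1$ and $\Gamma>\alpha+2$.
\end{itemize}
Invoking Lemma~\ref{lem:ball-estimate} here is not circular, since its proof does not use the disk estimate.

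Your route buys self-containedness. The disk estimate used in the Schur test of Proposition~\ref{prop:sufficiency} then rests only on the paper's own Lemma~\ref{lem:ball-estimate}, and it is transparent that each hypothesis is used exactly once. The citation buys only brevity.

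One small inaccuracy in your closing remark, with no effect on the argument: $|\eta|^\beta$ is never unbounded near $|\eta|=1$, whatever the size of $\beta$. The only genuine difficulty with the weight is at the origin.
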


We shall use Schur's test in the following form.

\begin{lemma}[Schur test]\label{lem:schur}
Let $(X,\mu)$ be a measure space, $1<p<\infty$, and $q=p/(p-1)$.  Let $T$ be a
positive integral operator
\[
   Tf(x)=\int_X K(x,y)f(y)\,d\mu(y),
   \qquad K(x,y)\ge0.
\]
If there exist a positive function $h$ and a constant $C>0$ such that
\[
   \int_X K(x,y)h(y)^q\,d\mu(y)\le C h(x)^q
\]
and
\[
   \int_X K(x,y)h(x)^p\,d\mu(x)\le C h(y)^p,
\]
then $T$ is bounded on $L^p(X,d\mu)$.
\end{lemma}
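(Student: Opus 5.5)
The plan is the standard Hölder-inequality argument. The test function $h$ splits the kernel into a piece controlled by the first hypothesis and a piece controlled by the second, after which Tonelli's theorem finishes the proof. Throughout I take $K$ to be jointly measurable and $\mu$ to be $\sigma$-finite, as is implicit in the statement, so that Tonelli applies. It suffices to treat $f\ge0$, because $|Tf|\le T|f|$ pointwise.

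\emph{Step 1 (pointwise bound via Hölder).} Fix $x$ and factor the integrand as
\[
K(x,y)f(y)=\bigl(K(x,y)^{1/q}h(y)\bigr)\bigl(K(x,y)^{1/p}h(y)^{-1}f(y)\bigr),
\]
which uses $1/p+1/q=1$. Hölder's inequality with exponents $q$ and $p$, followed by the first hypothesis, gives
\[
Tf(x)\le\Bigl(\int_X K(x,y)h(y)^q\,d\mu(y)\Bigr)^{1/q}\Bigl(\int_X K(x,y)h(y)^{-p}f(y)^p\,d\mu(y)\Bigr)^{1/p}
\le C^{1/q}h(x)\Bigl(\int_X K(x,y)h(y)^{-p}f(y)^p\,d\mu(y)\Bigr)^{1/p}.
\]

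\emph{Step 2 (integrate and swap the order).} Raise this bound to the $p$-th power and integrate in $x$. Since every integrand is nonnegative, Tonelli's theorem lets us interchange the integrals. The second hypothesis then gives
\[
\int_X (Tf)^p\,d\mu\le C^{p/q}\int_X f(y)^p h(y)^{-p}\Bigl(\int_X K(x,y)h(x)^p\,d\mu(x)\Bigr)d\mu(y)\le C^{p/q+1}\norm{f}_{L^p}^p.
\]
Hence $\norm{Tf}_{L^p}\le C^{1/q+1/p}\norm{f}_{L^p}=C\norm{f}_{L^p}$.

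As a byproduct, Step 2 shows that the defining integral of $Tf(x)$ is finite for $\mu$-a.e.\ $x$ whenever $f\in L^p$, so $T$ is well defined on $L^p$. For complex $f$ one applies the bound to $|f|$.

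There is no real obstacle here. The only delicate point is bookkeeping. Positivity of $h$ makes $h^{-1}$ meaningful. Nonnegativity of $K$ is what lets Tonelli's theorem, rather than Fubini's, justify the interchange, with no prior integrability assumption needed.
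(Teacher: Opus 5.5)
Your proof is correct: it is the standard H\"older--Tonelli argument for Schur's test. It uses the factorization $Kf=(K^{1/q}h)(K^{1/p}h^{-1}f)$, applies the first hypothesis inside H\"older and the second after interchanging the integrals, and yields the operator-norm bound $C$. The paper gives no proof of its own and cites the lemma as the standard Schur test from Zou; your $\sigma$-finiteness assumption holds in the paper's setting of Lebesgue measure on a domain in $\C^{n+1}$.
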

This is the standard Schur test; see, for example,
\cite[Theorem~2.5]{Zou2025Berezin}.

\section{Sufficiency}

We prove that $p>m+nl$ implies $L^p$-boundedness of $\calB_{m/l,n}$.  By
\eqref{eq:berezin-kernel-upper}, it is enough to prove Schur estimates for the
positive kernel
\begin{equation}\label{eq:global-positive-kernel}
 K^+((z,w),(\xi,\eta))
 =
 \frac{\left(1-\frac{\norm{z}^{2m}}{|w|^{2l}}\right)^{n+1}(1-|w|^2)^2
 |\eta|^{-2\frac{nl+m-1}{m}}}
 {|1-w\overline\eta|^4
 \left|1-\frac{\ip{z}{\xi}^m}{(w\overline\eta)^l}\right|^{2(n+1)}}.
\end{equation}

\begin{lemma}\label{lem:global-intervals}
Assume $p>m+nl$ and let $q=p/(p-1)$.  Then the interval
\begin{equation}\label{eq:global-s-interval}
  \left(-\frac{2}{mq},0\right]
  \cap
  \left(-\frac{2+\frac{2nl}{m}}{p},-\frac{2(nl+m-1)}{mp}\right)
\end{equation}
is nonempty.  Moreover, the intervals
\begin{equation}\label{eq:global-gamma1-interval}
  \left(-\frac1q,\frac{n+1}{q}\right)
  \cap
  \left(-\frac{n+2}{p},0\right)
\end{equation}
and
\begin{equation}\label{eq:global-gamma2-interval}
  \left(-\frac1q,\frac2q\right)
  \cap
  \left(-\frac3p,0\right)
\end{equation}
are nonempty.
\end{lemma}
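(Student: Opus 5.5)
This is an elementary statement about intervals of real numbers, so I will compare endpoints directly, writing $1/q=1-1/p$ throughout. The only step that uses the hypothesis $p>m+nl$ is the first interval \eqref{eq:global-s-interval}. The other two intervals are nonempty for every $1<p<\infty$.

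For \eqref{eq:global-s-interval}, the second factor is always a nonempty open interval, because
\[
\frac{2+\frac{2nl}{m}}{p}=\frac{2(m+nl)}{mp}>\frac{2(nl+m-1)}{mp}.
\]
Its right endpoint $-\frac{2(nl+m-1)}{mp}$ is negative, hence lies below $0$. Since all the intervals involved are convex, the intersection is nonempty exactly when the left endpoint of each factor is smaller than the right endpoint of the other. So it suffices to check
\[
-\frac{2}{mq}<-\frac{2(nl+m-1)}{mp}.
\]
This is equivalent to $\frac{nl+m-1}{p}<\frac1q=1-\frac1p$, that is, $nl+m<p$, which is exactly the hypothesis. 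Taking any $s$ slightly larger than $\max\bigl(-\frac{2}{mq},-\frac{2(m+nl)}{mp}\bigr)$ and smaller than $-\frac{2(nl+m-1)}{mp}$ then exhibits a point of the intersection.

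For \eqref{eq:global-gamma1-interval}, both factors contain small negative numbers. Any $\gamma$ with $-\min\bigl(\frac1q,\frac{n+2}{p}\bigr)<\gamma<0$ lies in both, since $0<\frac{n+1}{q}$. The same reasoning handles \eqref{eq:global-gamma2-interval}: any $\gamma$ with $-\min\bigl(\frac1q,\frac3p\bigr)<\gamma<0$ lies in both factors.

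The only real content is the endpoint comparison for the first interval. There one just has to keep the signs straight and check that its right factor is nondegenerate, which the computation above does.
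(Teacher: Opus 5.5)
Your proof is correct and is essentially the paper's argument. You note that the second factor of the $s$-interval is a nondegenerate open interval lying left of $0$, reduce nonemptiness to the single comparison $-\frac{2}{mq}<-\frac{2(nl+m-1)}{mp}$ (equivalent to $p>m+nl$), and observe that both $\gamma$-intervals always contain small negative numbers, which is a slightly more explicit version of the paper's remark that they are nonempty because $p,q>1$.
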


\begin{proof}
The intervals \eqref{eq:global-gamma1-interval} and
\eqref{eq:global-gamma2-interval} are nonempty because $p,q>1$.  For
\eqref{eq:global-s-interval}, the second lower endpoint is strictly smaller than
its upper endpoint since
\[
   -\frac{2+\frac{2nl}{m}}{p}<-\frac{2(nl+m-1)}{mp}.
\]
Thus the only nontrivial condition is
\[
   -\frac{2}{mq}<-\frac{2(nl+m-1)}{mp}.
\]
Since $q=p/(p-1)$, this is equivalent to
\[
   p-1>nl+m-1,
\]
that is, $p>m+nl$.
\end{proof}

\begin{proposition}\label{prop:sufficiency}
If $1<p<\infty$ and $p>m+nl$, then $\calB_{m/l,n}$ is bounded on
$L^p(\Omega^{n+1}_{m/l})$.
\end{proposition}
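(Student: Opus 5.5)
By \eqref{eq:berezin-kernel-upper}, $\calB_{m/l,n}$ is dominated pointwise by the positive operator with kernel $K^+$ from \eqref{eq:global-positive-kernel}. It therefore suffices to verify the two Schur inequalities of \cref{lem:schur} for $K^+$.

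For the test function I will use the product
\[
 h(\xi,\eta)=|\eta|^{s}\Bigl(1-\frac{\norm{\xi}^{2m}}{|\eta|^{2l}}\Bigr)^{\gamma_1}(1-|\eta|^2)^{\gamma_2}.
\]
The exponents will be chosen as follows:
\begin{itemize}
\item $s$ in the interval \eqref{eq:global-s-interval} (after the appropriate rescaling);
\item $\gamma_1$ in \eqref{eq:global-gamma1-interval};
\item $\gamma_2$ in \eqref{eq:global-gamma2-interval}.
\end{itemize}
\cref{lem:global-intervals} guarantees that all three choices are possible precisely when $p>m+nl$.

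The main computational device is a change of variables on each fibre. For fixed $\eta$, substitute $\xi=\eta^{l/m}u$, interpreting it as $\xi=|\eta|^{l/m}u$ up to a rotation, with $u\in\B_n$. Then $dV(\xi)=|\eta|^{2nl/m}dV(u)$, and
\[
 \frac{\ip{z}{\xi}^m}{(w\overline\eta)^l}=\ip{a}{u'}^m\cdot(\text{unimodular factor}),\qquad a=z\,w^{-l/m}\in\B_n .
\]
After rotating $u$ to absorb the phases of $\eta$ and $w$, the integrand becomes $|1-\ip{a}{u}^m|^{-2(n+1)}$. This works since $\ip{a}{e^{i\theta}u}^m$ only shifts by $e^{im\theta}$, and the angle of $w^{-l/m}$ can be chosen consistently. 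The point is that $\norm{a}^{2m}=\norm{z}^{2m}/|w|^{2l}$.

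\textbf{First Schur inequality.} Integrate in $\xi$ with $q$-th powers: $\int K^+h^q\,dV(\xi,\eta)$. The inner ball integral is handled by \eqref{eq:ball-estimate-gamma} with $\alpha=q\gamma_1>-1$ and $\Gamma=2(n+1)$. Its hypothesis $\Gamma>n+1+\alpha$ means $q\gamma_1<n+1$, and the output is $(1-\norm{a}^{2m})^{q\gamma_1-n-1}$. Multiplying by the factor $(1-\norm{a}^{2m})^{n+1}$ in $K^+$ returns exactly $h$'s $\gamma_1$-part to the $q$-th power.

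What remains is an integral over $\D^*$:
\[
 \int_{\D^*}\frac{(1-|\eta|^2)^{q\gamma_2}|\eta|^{qs+2nl/m-2(nl+m-1)/m}}{|1-w\overline\eta|^4}\,dV(\eta).
\]
Apply \cref{lem:disk-estimate} with $\Gamma=4$. This needs $q\gamma_2>-1$ and $q\gamma_2<2$, and the resulting $(1-|w|^2)^{q\gamma_2-2}$ cancels against $(1-|w|^2)^2$. It also needs the $|\eta|$-exponent to exceed $-2$, i.e. $qs>-2/m$. That is the first constraint in \eqref{eq:global-s-interval}. Finally, the factor $|w|^{qs}$ must be recovered. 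Since $s\le0$ and $|w|<1$, we have $1\le|w|^{qs}$, so the bound $\less(1-\norm{a}^{2m})^{q\gamma_1}(1-|w|^2)^{q\gamma_2}\le h^q(z,w)$ follows.

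\textbf{Second Schur inequality.} Now integrate in $(z,w)$ against $h(z,w)^p$, for fixed $(\xi,\eta)$. Substitute $z=w^{l/m}v$, with $dV(z)=|w|^{2nl/m}dV(v)$, and apply \eqref{eq:ball-estimate-gamma} with $\alpha=n+1+p\gamma_1$, $\Gamma=2(n+1)$ and $b=\xi\eta^{-l/m}$. The requirements are $\alpha>-1$ (that is, $p\gamma_1>-(n+2)$) and $\Gamma>n+1+\alpha$ (that is, $\gamma_1<0$). The output is $(1-\norm{b}^{2m})^{p\gamma_1}$, as needed.

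The disk integral is then
\[
 \int(1-|w|^2)^{2+p\gamma_2}|w|^{ps+2nl/m}|1-w\overline\eta|^{-4}\,dV(w),
\]
with $\alpha=2+p\gamma_2$. \cref{lem:disk-estimate} needs $p\gamma_2>-3$ and $p\gamma_2<0$, and requires $ps+2nl/m>-2$, i.e. $s>-(2+2nl/m)/p$. This gives $(1-|\eta|^2)^{p\gamma_2}$. It remains to absorb $|\eta|^{-2(nl+m-1)/m}$ into $|\eta|^{ps}$, which requires $ps\le-2(nl+m-1)/m$. That is exactly the upper bound in \eqref{eq:global-s-interval}.

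\textbf{Main obstacle.} I expect the hardest step to be the fibrewise change of variables: the branch of $w^{l/m}$ and the phase handling must be justified so that \cref{lem:ball-estimate} applies. Everything else is bookkeeping of exponents, and these are precisely the interval conditions of \cref{lem:global-intervals}.
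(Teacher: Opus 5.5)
Your proposal is correct and follows the paper's proof essentially step for step. It uses the same test function $h$, the same fibrewise substitution $\xi=|\eta|^{l/m}u$ (resp.\ $z=|w|^{l/m}u$), and the same applications of Lemma~\ref{lem:ball-estimate} and Lemma~\ref{lem:disk-estimate}, with $s\le 0$ for the first Schur inequality and $ps<-2(nl+m-1)/m$ for the second. You also correctly absorb the unimodular phase factor by rotating $u$, a detail the paper leaves implicit; note that only the $s$-interval actually needs $p>m+nl$, since the $\gamma_1,\gamma_2$ intervals are nonempty for every $p>1$.
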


\begin{proof}
Let $q=p/(p-1)$.  By Lemma~\ref{lem:global-intervals}, choose
\begin{align*}
\gamma_1&\in
\left(-\frac1q,\frac{n+1}{q}\right)
\cap
\left(-\frac{n+2}{p},0\right),\\
\gamma_2&\in
\left(-\frac1q,\frac2q\right)
\cap
\left(-\frac3p,0\right),\\
s&\in
\left(-\frac{2}{mq},0\right]
\cap
\left(-\frac{2+\frac{2nl}{m}}{p},-\frac{2(nl+m-1)}{mp}\right).
\end{align*}
Set
\begin{equation*}
   h(z,w)=\left(1-\frac{\norm{z}^{2m}}{|w|^{2l}}\right)^{\gamma_1}(1-|w|^2)^{\gamma_2}|w|^s.
\end{equation*}
We verify Schur's test for the positive kernel \eqref{eq:global-positive-kernel}.

First fix $(z,w)\in\Omega^{n+1}_{m/l}$ and estimate
\[
   \int_{\Omega^{n+1}_{m/l}} K^+((z,w),(\xi,\eta))h(\xi,\eta)^q\dV(\xi,\eta).
\]
Put
\[
    \xi=|\eta|^{l/m}u,
    \qquad u\in\B_n.
\]
Then $dV(\xi)=|\eta|^{\frac{2nl}{m}}dV(u)$ and
\[
   1-\frac{\norm{\xi}^{2m}}{|\eta|^{2l}}=1-\norm{u}^{2m}.
\]
It follows that
\begin{align*}
&\int_{\Omega^{n+1}_{m/l}} K^+((z,w),(\xi,\eta))h(\xi,\eta)^q\dV(\xi,\eta) \\
&= 
\left(1-\frac{\norm{z}^{2m}}{|w|^{2l}}\right)^{n+1}(1-|w|^2)^2 \\
&\quad \times
\int_{\D^*}\int_{\B_n}
\frac{(1-\norm{u}^{2m})^{\gamma_1 q}(1-|\eta|^2)^{\gamma_2 q}
|\eta|^{\frac{2nl}{m}-2\frac{nl+m-1}{m}+sq}}
{|1-w\overline\eta|^4
\left|1-\frac{\ip{z}{|\eta|^{l/m}u}^m}{(w\overline\eta)^l}\right|^{2(n+1)}}
\dV(u)\dV(\eta).
\end{align*}
The ball part is bounded by Lemma~\ref{lem:ball-estimate}:
\begin{align*}
 &\int_{\B_n}\frac{(1-\norm{u}^{2m})^{\gamma_1 q}}
 {\left|1-\frac{\ip{z}{|\eta|^{l/m}u}^m}{(w\overline\eta)^l}\right|^{2(n+1)}}\dV(u) \\
 &\hspace{1cm}\less
 \left(1-\frac{\norm{z}^{2m}}{|w|^{2l}}\right)^{\gamma_1q-(n+1)},
\end{align*}
because $\gamma_1q>-1$ and $\gamma_1q-(n+1)<0$.  The disk part is
\[
  \int_{\D^*}
  \frac{(1-|\eta|^2)^{\gamma_2q}|\eta|^{\frac{2nl}{m}-2\frac{nl+m-1}{m}+sq}}
  {|1-w\overline\eta|^4}\dV(\eta).
\]
The choice of $s$ gives
\[
   \frac{2nl}{m}-2\frac{nl+m-1}{m}+sq>-2,
\]
and the choice of $\gamma_2$ gives $\gamma_2q>-1$ and $\gamma_2q-2<0$.  Hence
Lemma~\ref{lem:disk-estimate} yields
\[
  \int_{\D^*}
  \frac{(1-|\eta|^2)^{\gamma_2q}|\eta|^{\frac{2nl}{m}-2\frac{nl+m-1}{m}+sq}}
  {|1-w\overline\eta|^4}\dV(\eta)
  \less (1-|w|^2)^{\gamma_2q-2}.
\]
Multiplying by the outside factor
$\left(1-\frac{\norm{z}^{2m}}{|w|^{2l}}\right)^{n+1}(1-|w|^2)^2$, we obtain
\[
   \int_{\Omega^{n+1}_{m/l}} K^+((z,w),(\xi,\eta))h(\xi,\eta)^q\dV(\xi,\eta)
   \less \left(1-\frac{\norm{z}^{2m}}{|w|^{2l}}\right)^{\gamma_1q}(1-|w|^2)^{\gamma_2q}.
\]
Since $s\le0$, $|w|^{sq}\ge1$ on $\D^*$, and therefore
\begin{equation*}
   \int_{\Omega^{n+1}_{m/l}} K^+((z,w),(\xi,\eta))h(\xi,\eta)^q\dV(\xi,\eta)
   \less h(z,w)^q.
\end{equation*}

For the second Schur estimate fix $(\xi,\eta)\in\Omega^{n+1}_{m/l}$ and integrate in
$(z,w)\in\Omega^{n+1}_{m/l}$.  Make the substitution $z=|w|^{l/m}u$.  Then
$dV(z)=|w|^{\frac{2nl}{m}}dV(u)$ and
\[
   1-\frac{\norm{z}^{2m}}{|w|^{2l}}=1-\norm{u}^{2m}.
\]
It follows that
\begin{align*}
&\int_{\Omega^{n+1}_{m/l}} K^+((z,w),(\xi,\eta))h(z,w)^p\dV(z,w) \\
&=
|\eta|^{-2\frac{nl+m-1}{m}}
\int_{\D^*}\int_{\B_n}
\frac{(1-\norm{u}^{2m})^{n+1+\gamma_1p}(1-|w|^2)^{2+\gamma_2p}|w|^{\frac{2nl}{m}+sp}}
{|1-w\overline\eta|^4
\left|1-\frac{\ip{|w|^{l/m}u}{\xi}^m}{(w\overline\eta)^l}\right|^{2(n+1)}}
\dV(u)\dV(w).
\end{align*}
The ball part now has weight $(1-\norm{u}^{2m})^{n+1+\gamma_1p}$, and
Lemma~\ref{lem:ball-estimate} gives
\[
\int_{\B_n}\frac{(1-\norm{u}^{2m})^{n+1+\gamma_1p}}
{\left|1-\frac{\ip{|w|^{l/m}u}{\xi}^m}{(w\overline\eta)^l}\right|^{2(n+1)}}\dV(u)
\less \left(1-\frac{\norm{\xi}^{2m}}{|\eta|^{2l}}\right)^{\gamma_1p},
\]
because $n+1+\gamma_1p>-1$ and $\gamma_1p<0$.  The disk part is
\[
  \int_{\D^*}
  \frac{(1-|w|^2)^{2+\gamma_2p}|w|^{\frac{2nl}{m}+sp}}
  {|1-w\overline\eta|^4}\dV(w).
\]
The choices of $\gamma_2$ and $s$ give
$2+\gamma_2p>-1$, $\gamma_2p<0$, and $\frac{2nl}{m}+sp>-2$.  Hence
\[
  \int_{\D^*}
  \frac{(1-|w|^2)^{2+\gamma_2p}|w|^{\frac{2nl}{m}+sp}}
  {|1-w\overline\eta|^4}\dV(w)
  \less (1-|\eta|^2)^{\gamma_2p}.
\]
There remains the outside factor $|\eta|^{-2(nl+m-1)/m}$ from the kernel.  Since
$s<-2(nl+m-1)/(mp)$, we have $sp<-2(nl+m-1)/m$, and hence
$|\eta|^{-2(nl+m-1)/m}\le |\eta|^{sp}$ for $0<|\eta|<1$.  Therefore
\begin{equation*}
   \int_{\Omega^{n+1}_{m/l}} K^+((z,w),(\xi,\eta))h(z,w)^p\dV(z,w)
   \less \left(1-\frac{\norm{\xi}^{2m}}{|\eta|^{2l}}\right)^{\gamma_1p}(1-|\eta|^2)^{\gamma_2p}|\eta|^{sp}
   =h(\xi,\eta)^p.
\end{equation*}
By Schur's test, the positive operator with kernel $K^+$ is bounded on
$L^p(\Omega^{n+1}_{m/l})$.  The estimate \eqref{eq:berezin-kernel-upper} then gives
the boundedness of $\calB_{m/l,n}$.
\end{proof}

\section{Necessity}\label{sec:necessity}

We now prove the converse implication.  Let $j_0\in\{0,1,\ldots,m-1\}$ be the unique integer satisfying
\begin{equation*}
   l(j_0+n)\equiv1\pmod m,
\end{equation*}
and define
\begin{equation}\label{eq:E0-beta0}
   E_0=\frac{l(j_0+n)-1}{m},
   \qquad
   \beta_0=-1-E_0.
\end{equation}
Then $E_0\in\Z_{\ge0}$.

\begin{lemma}\label{lem:critical-monomial}
Let $\alpha^0=(j_0,0,\ldots,0)\in\N^n$ and let $\beta_0$ be defined by
\eqref{eq:E0-beta0}.  Then
\[
     z^{\alpha^0}w^{\beta_0}=z_1^{j_0}w^{-1-E_0}
\]
belongs to $A^2(\Omega^{n+1}_{m/l})$.  Moreover,
\begin{equation}\label{eq:kappa-identity}
   1+E_0-\frac{lj_0}{m}=\frac{m+nl-1}{m}=\frac{nl+m-1}{m}.
\end{equation}
\end{lemma}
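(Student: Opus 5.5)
The plan is to verify the two assertions of Lemma~\ref{lem:critical-monomial} directly from the integer criterion \eqref{eq:integer-monomial-condition} and from the definition \eqref{eq:E0-beta0}. First, I will check that $E_0$ is a nonnegative integer. By the choice of $j_0$, $m$ divides $l(j_0+n)-1$, so $E_0\in\Z$. Since $n\ge1$ and $l\ge1$, we have $l(j_0+n)-1\ge0$, hence $E_0\ge0$; this is the claim already recorded before the lemma. In particular, $E_0$ coincides with $E_{j_0}=\lfloor((j_0+n)l-1)/m\rfloor$, since the floor is attained exactly.

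For membership in $A^2(\Omega^{n+1}_{m/l})$, I will take $\alpha=\alpha^0$, so that $|\alpha|=j_0$, and $\beta=\beta_0=-1-E_0$. I then test \eqref{eq:integer-monomial-condition}:
\[
   l|\alpha|+m\beta = lj_0 - m - mE_0 = lj_0-m-\bigl(l(j_0+n)-1\bigr) = -m-nl+1 .
\]
This is exactly the lower threshold $-m-nl+1$, so the inequality holds with equality. Hence $(\alpha^0,\beta_0)\in\Lambda_{m/l,n}$, and the monomial lies in the Bergman space. Equivalently, in terms of \eqref{eq:monomial-condition}, we get $|\alpha|+\frac{m}{l}(\beta+1)=-n+\frac1l>-n$.

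For the identity \eqref{eq:kappa-identity}, I will compute
\[
   1+E_0-\frac{lj_0}{m}=\frac{m+l(j_0+n)-1-lj_0}{m}=\frac{m+nl-1}{m},
\]
which is immediate. The second equality there is merely a rearrangement of terms.

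There is no real obstacle here. The only point that needs care is confirming that the congruence makes $E_0$ an integer, so that $w^{\beta_0}$ is a genuine monomial. One also notes that the equality case of \eqref{eq:integer-monomial-condition} is still admissible, because the condition there is a non-strict inequality derived from the strict one in \eqref{eq:monomial-condition}.
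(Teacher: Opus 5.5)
Your proposal is correct and follows the paper's proof: you check that \eqref{eq:integer-monomial-condition} holds with equality, $lj_0-m-mE_0=-m-nl+1$, and you compute $1+E_0-\frac{lj_0}{m}=\frac{m+nl-1}{m}$ directly from the definition of $E_0$. Your added check that $E_0\in\Z_{\ge0}$ follows from the congruence; the paper simply records this fact just before the lemma.
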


\begin{proof}
Using \eqref{eq:E0-beta0},
\[
  l|\alpha^0|+m\beta_0
  =lj_0-m-mE_0
  =lj_0-m-l(j_0+n)+1
  =-m-nl+1.
\]
Thus $(\alpha^0,\beta_0)$ satisfies the integer condition
\eqref{eq:integer-monomial-condition} with equality, and hence the monomial
belongs to $A^2(\Omega^{n+1}_{m/l})$.  Finally,
\[
   1+E_0-\frac{lj_0}{m}
   =1+\frac{l(j_0+n)-1}{m}-\frac{lj_0}{m}
   =\frac{m+nl-1}{m}.
\]
\end{proof}

\begin{lemma}\label{lem:adjoint-lower}
There exist a compact Reinhardt set $U\Subset\Omega^{n+1}_{m/l}$ and a constant $c>0$ such
that, for every $(\xi,\eta)\in\Omega^{n+1}_{m/l}$,
\begin{equation*}
   \calB_{m/l,n}^*\chi_U(\xi,\eta)
   \ge c\,|\xi_1|^{2j_0}|\eta|^{-2(1+E_0)}.
\end{equation*}
Here $\chi_U$ denotes the characteristic function of $U$, and $\calB_{m/l,n}^*$ denotes the Banach-space adjoint integral operator,
\begin{equation*}
   \calB_{m/l,n}^*g(\xi,\eta)
   =\int_{\Omega^{n+1}_{m/l}}
   \frac{|B_{m/l,n}((z,w),(\xi,\eta))|^2}{B_{m/l,n}((z,w),(z,w))}g(z,w)\dV(z,w).
\end{equation*}
\end{lemma}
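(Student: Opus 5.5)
The plan is to bound the integrand from below on a small compact Reinhardt set $U$ around a point where the critical monomial dominates, using the reproducing property. Write
\[
\calB_{m/l,n}^*\chi_U(\xi,\eta)=\int_U \frac{|B_{m/l,n}((z,w),(\xi,\eta))|^2}{B_{m/l,n}((z,w),(z,w))}\dV(z,w).
\]
Since $U\Subset\Omega^{n+1}_{m/l}$, the diagonal value $B_{m/l,n}((z,w),(z,w))$ is continuous and positive on $U$, hence bounded above by some constant $C_U$. It therefore suffices to show
\[
\int_U |B_{m/l,n}((z,w),(\xi,\eta))|^2\dV(z,w)\gtrsim |\xi_1|^{2j_0}|\eta|^{-2(1+E_0)}.
\]
For this I would take $U$ to be a closed Reinhardt shell, say
\[
U=\{(z,w):\ \norm{z}\le \tfrac12|w|^{l/m},\ \tfrac14\le|w|\le\tfrac34\},
\]
which is compact in the domain and invariant under $(z,w)\mapsto(e^{i\theta}z_1,\dots,e^{i\theta_n}z_n,e^{i\phi}w)$.

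Next, expand $B_{m/l,n}(\cdot,(\xi,\eta))$ via \eqref{eq:full-kernel-expansion} as $\sum c_{\alpha\beta}\overline{\xi^\alpha\eta^\beta}\,z^\alpha w^\beta$ with $c_{\alpha\beta}=\norm{z^\alpha w^\beta}^{-2}>0$. Because $U$ is Reinhardt, distinct monomials are orthogonal in $L^2(U)$. The series converges in $L^2(U)$, indeed uniformly on the compact set $U$ for fixed $(\xi,\eta)$. Hence
\[
\int_U|B|^2\dV=\sum_{(\alpha,\beta)\in\Lambda_{m/l,n}} c_{\alpha\beta}^2|\xi^\alpha\eta^\beta|^2\int_U|z^\alpha w^\beta|^2\dV(z,w).
\]
Every term is nonnegative, so I would discard all terms except $(\alpha^0,\beta_0)$. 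This pair lies in $\Lambda_{m/l,n}$ by Lemma~\ref{lem:critical-monomial}. The retained term equals
\[
c_{\alpha^0\beta_0}^2\norm{z_1^{j_0}w^{\beta_0}}_{L^2(U)}^2\,|\xi_1|^{2j_0}|\eta|^{2\beta_0}.
\]
Here $\beta_0=-1-E_0$, and the prefactor is a positive constant independent of $(\xi,\eta)$: it is nonzero because $U$ has nonempty interior. This gives the lower bound with $c=C_U^{-1}c_{\alpha^0\beta_0}^2\norm{z_1^{j_0}w^{\beta_0}}_{L^2(U)}^2$.

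The main point to check is the interchange of sum and integral, that is, the orthogonality expansion of $\int_U|B|^2$. This is justified because the partial sums of the kernel expansion converge locally uniformly on $\Omega^{n+1}_{m/l}$. Locally uniform convergence holds since the expansion is the standard orthonormal-basis expansion of a reproducing kernel, and uniform convergence on $U$ lets Parseval on $L^2(U)$ restricted to monomials apply term by term. The condition that $\beta_0$ be the extremal exponent, so that $|\eta|^{-2(1+E_0)}$ with $(1+E_0-lj_0/m)=(nl+m-1)/m$ is the worst singularity, is already supplied by \eqref{eq:kappa-identity}. It is not needed for the inequality itself; it is used later, when integrating the lower bound against $|\xi_1|^{2j_0}$ near $\eta=0$ to show $\calB^*\chi_U\notin L^{q}$ for $p\le m+nl$.
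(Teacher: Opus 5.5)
Your proposal is correct and follows the paper's proof essentially step for step. You bound the diagonal kernel above on a compact Reinhardt set, expand $B_{m/l,n}(\cdot,(\xi,\eta))$ in monomials, use orthogonality in $L^2(U)$, and keep only the critical term $(\alpha^0,\beta_0)$. The differences are cosmetic (your shell $U$ versus the paper's set $\{\norm{z}<\varepsilon,\ r_0<|w|<r_1\}$), and your justification of the term-by-term expansion via $L^2$/locally uniform convergence fills in a step the paper leaves implicit.
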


\begin{proof}
Choose numbers $0<r_0<r_1<1$ and $\varepsilon>0$ so small that
$\varepsilon^m<r_0^l$, and set
\[
   U=\{(z,w)\in\C^n\times\C:\ \norm{z}<\varepsilon,
       \ r_0<|w|<r_1\}.
\]
Then $U\Subset\Omega^{n+1}_{m/l}$, and $U$ is a Reinhardt set.  Since $B_{m/l,n}((z,w),(z,w))$ is
continuous and positive on $U$, there is a constant $M>0$ such that
\[
   B_{m/l,n}((z,w),(z,w))\le M,\qquad (z,w)\in U.
\]
Therefore
\begin{align}\label{eq:adjoint-lower-step1}
   \calB_{m/l,n}^*\chi_U(\xi,\eta)
   &=\int_U\frac{|B_{m/l,n}((z,w),(\xi,\eta))|^2}{B_{m/l,n}((z,w),(z,w))}\dV(z,w) \notag\\
   &\ge \frac1M\int_U |B_{m/l,n}((z,w),(\xi,\eta))|^2\dV(z,w).
\end{align}

Now insert the monomial expansion \eqref{eq:full-kernel-expansion}.  For fixed
$(\xi,\eta)$, the kernel is a holomorphic function of $(z,w)$ with expansion
\[
   B_{m/l,n}((z,w),(\xi,\eta))
   =\sum_{(\alpha,\beta)\in\Lambda_{m/l,n}}
     c_{\alpha,\beta} z^\alpha w^\beta
     \overline{\xi^\alpha\eta^\beta},
   \qquad c_{\alpha,\beta}>0.
\]
Because $U$ is Reinhardt, distinct monomials are orthogonal in $L^2(U)$.  Hence
\begin{align*}
   \int_U |B_{m/l,n}((z,w),(\xi,\eta))|^2\dV(z,w)
   &=\sum_{(\alpha,\beta)\in\Lambda_{m/l,n}}
     |c_{\alpha,\beta}|^2
     \left(\int_U |z^\alpha w^\beta|^2\dV(z,w)\right)
     |\xi^\alpha|^2|\eta|^{2\beta}.
\end{align*}
All summands are nonnegative.  Keeping only the critical monomial
$(\alpha^0,
\beta_0)$ from Lemma~\ref{lem:critical-monomial}, we get
\[
   \int_U |B_{m/l,n}((z,w),(\xi,\eta))|^2\dV(z,w)
   \ge c_0 |\xi_1|^{2j_0}|\eta|^{2\beta_0}
   =c_0 |\xi_1|^{2j_0}|\eta|^{-2(1+E_0)}.
\]
Combining this with \eqref{eq:adjoint-lower-step1} proves the lemma.
\end{proof}

\begin{proposition}\label{prop:necessity}
Let $1<p<\infty$.  If $\calB_{m/l,n}$ is bounded on $L^p(\Omega^{n+1}_{m/l})$, then
$p>m+nl$.
\end{proposition}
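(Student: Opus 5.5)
We argue by duality, using Lemma~\ref{lem:adjoint-lower}. Suppose $\calB_{m/l,n}$ is bounded on $L^p(\Omega^{n+1}_{m/l})$ with $1<p<\infty$, and let $q=p/(p-1)$. The kernel is nonnegative and real, so $\calB_{m/l,n}^*$, as defined in Lemma~\ref{lem:adjoint-lower}, is the Banach-space adjoint. It is therefore bounded on $L^q(\Omega^{n+1}_{m/l})$. The set $U$ from that lemma is compactly contained in the bounded domain, so $\chi_U\in L^q$. Hence $\calB_{m/l,n}^*\chi_U\in L^q$. By the lemma this forces
\[
   \int_{\Omega^{n+1}_{m/l}} |\xi_1|^{2j_0q}\,|\eta|^{-2(1+E_0)q}\dV(\xi,\eta)<\infty .
\]

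The key step is to compute this integral explicitly. We use the same substitution as in the sufficiency proof, $\xi=|\eta|^{l/m}u$ with $u\in\B_n$, so that $dV(\xi)=|\eta|^{2nl/m}dV(u)$ and $|\xi_1|^{2j_0q}=|\eta|^{2lj_0q/m}|u_1|^{2j_0q}$. The integral then factors as
\[
   \int_{\B_n}|u_1|^{2j_0q}\dV(u)\cdot\int_{\D^*}|\eta|^{\frac{2lj_0q}{m}-2(1+E_0)q+\frac{2nl}{m}}\dV(\eta).
\]
The ball factor is a finite positive constant. By \eqref{eq:kappa-identity}, the exponent of $|\eta|$ equals
\[
   -\frac{2(nl+m-1)}{m}\,q+\frac{2nl}{m}.
\]
In polar coordinates the disk integral is finite if and only if this exponent exceeds $-2$. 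That condition reads $(nl+m-1)q<nl+m$, that is, $q<\frac{nl+m}{nl+m-1}$, which is equivalent to $p>nl+m$.

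The main point needing care is the duality step. We must check that boundedness of $\calB_{m/l,n}$ on $L^p$ gives boundedness of the integral operator $\calB_{m/l,n}^*$ on $L^q$, and not merely of some abstract adjoint. This follows from Fubini–Tonelli, since the kernel is nonnegative: for $f\in L^p$ and $g\in L^q$ we have $\int(\calB f)g=\int f(\calB^*g)$. Testing this identity against nonnegative $f$ gives $\|\calB^*\chi_U\|_q\le\|\calB\|\,\|\chi_U\|_q<\infty$. When $m+nl=1$ the condition $p>1$ holds automatically, and that degenerate case is consistent with the statement.
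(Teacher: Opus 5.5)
Your proof is correct and follows the paper's argument essentially step for step. You use duality to get $\calB_{m/l,n}^*\chi_U\in L^q$, then the lower bound of Lemma~\ref{lem:adjoint-lower}, then the scaling $\xi=|\eta|^{l/m}u$ to reduce to a power integral in $|\eta|$, which converges exactly when $p>m+nl$. Your Tonelli justification of the duality step is a small addition the paper omits, and your remark about $m+nl=1$ is vacuous because $m,l,n\ge1$ forces $m+nl\ge2$.
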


\begin{proof}
Let $q=p/(p-1)$.  Since $\calB_{m/l,n}$ is a positive integral operator bounded
on $L^p(\Omega^{n+1}_{m/l})$, its adjoint $\calB_{m/l,n}^*$ is bounded on $L^q(\Omega^{n+1}_{m/l})$.
Choose the set $U$ from Lemma~\ref{lem:adjoint-lower}.  Since $\chi_U\in L^q(\Omega^{n+1}_{m/l})$,
we have
\[
   \calB_{m/l,n}^*\chi_U\in L^q(\Omega^{n+1}_{m/l}).
\]
By Lemma~\ref{lem:adjoint-lower}, this implies the local integrability condition
\begin{equation}\label{eq:necessary-integrability}
  \int_{0<|\eta|<\delta}
  \int_{\norm{\xi}^m<|\eta|^l}
  |\xi_1|^{2j_0q}|\eta|^{-2(1+E_0)q}
  \dV(\xi)\dV(\eta)<\infty
\end{equation}
for sufficiently small $\delta>0$.

For fixed $r=|\eta|$, the inner integral can be evaluated by the scaling
\(\xi=r^{l/m}u\), \(u\in\mathbb B_n\).  Hence
\begin{align*}
   \int_{\norm{\xi}<r^{l/m}} |\xi_1|^{2j_0q}\dV(\xi)
   &= r^{\frac{l}{m}(2j_0q+2n)}
      \int_{\mathbb B_n}|u_1|^{2j_0q}\dV(u) \\
   &= C r^{\frac{l}{m}(2j_0q+2n)}.
\end{align*}
Thus \eqref{eq:necessary-integrability} is equivalent to convergence of
\[
   \int_0^\delta
   r^{-2(1+E_0)q}
   r^{\frac{l}{m}(2j_0q+2n)}
   r\,dr.
\]
This integral converges if and only if
\[
   -2(1+E_0)q+\frac{2lj_0}{m}q+\frac{2nl}{m}>-2.
\]
Equivalently,
\[
   \left(1+E_0-\frac{lj_0}{m}\right)q<1+\frac{nl}{m}.
\]
Using \eqref{eq:kappa-identity}, we obtain
\[
   \frac{m+nl-1}{m}q<\frac{m+nl}{m},
\]
or
\begin{equation}\label{eq:q-condition}
   q<\frac{m+nl}{m+nl-1}.
\end{equation}
Since $q=p/(p-1)$, \eqref{eq:q-condition} is equivalent to
\[
   p>m+nl.
\]
This proves the proposition.
\end{proof}

\begin{proposition}\label{prop:endpoint}
The Berezin transform $\calB_{m/l,n}$ is not bounded on $L^1(\Omega^{n+1}_{m/l})$.
\end{proposition}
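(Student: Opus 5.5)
We argue by duality, as in Proposition~\ref{prop:necessity}, but now on the pair $(L^1,L^\infty)$. Suppose, for contradiction, that $\calB_{m/l,n}$ is bounded on $L^1(\Omega^{n+1}_{m/l})$.

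\emph{Step 1: pass to the adjoint on $L^\infty$.} The kernel of $\calB_{m/l,n}$ is nonnegative, so Tonelli's theorem gives, for every $f\in L^1$ and $g\in L^\infty$,
\[
\int_{\Omega^{n+1}_{m/l}}(\calB_{m/l,n}f)\,g\dV=\int_{\Omega^{n+1}_{m/l}} f\,(\calB_{m/l,n}^*g)\dV .
\]
Hence $\calB_{m/l,n}^*$ maps $L^\infty$ boundedly into $L^\infty$. In particular, for the compact Reinhardt set $U$ of Lemma~\ref{lem:adjoint-lower}, the function $\calB_{m/l,n}^*\chi_U$ is essentially bounded on $\Omega^{n+1}_{m/l}$. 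To justify the duality step cleanly, test against nonnegative $f\in L^1$ with compact support; then Tonelli applies directly, and we get
\[
\int f\,\calB_{m/l,n}^*\chi_U\dV\le \|\calB_{m/l,n}\|_{L^1\to L^1}\,\|f\|_{L^1}.
\]
Since this holds for all such $f$, the bound $\|\calB_{m/l,n}^*\chi_U\|_{L^\infty}<\infty$ follows.

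\emph{Step 2: contradict this with Lemma~\ref{lem:adjoint-lower}.} That lemma gives
\[
\calB_{m/l,n}^*\chi_U(\xi,\eta)\ge c\,|\xi_1|^{2j_0}|\eta|^{-2(1+E_0)}.
\]
We show the right-hand side is unbounded on a set of positive measure near $\eta=0$. Take points with $\xi_1$ comparable to $|\eta|^{l/m}$ in the admissible region, namely
\[
S_r=\{(\xi,\eta): \tfrac12|\eta|^{l/m}<|\xi_1|,\ \norm{\xi}<|\eta|^{l/m},\ |\eta|<r\}.
\]
Each $S_r$ is an open nonempty subset of $\Omega^{n+1}_{m/l}$, so it has positive measure. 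On $S_r$, the identity \eqref{eq:kappa-identity} gives
\[
|\xi_1|^{2j_0}|\eta|^{-2(1+E_0)}\gtrsim |\eta|^{2lj_0/m-2(1+E_0)}=|\eta|^{-2(nl+m-1)/m}\ge r^{-2(nl+m-1)/m}.
\]
Since $m\ge1$ and $n,l\ge1$, we have $nl+m-1\ge1$, so the exponent is negative and the lower bound tends to $\infty$ as $r\to0$. This contradicts the essential boundedness from Step 1.

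The only delicate point is Step 1, the $L^1$ duality with an unbounded positive kernel. The positivity argument with compactly supported nonnegative $f$ handles it without needing any density or continuity properties of the adjoint. Step 2 is the same scaling computation as in Proposition~\ref{prop:necessity}, now in the limiting case $q=\infty$.
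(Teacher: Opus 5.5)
Your proof is correct and follows the paper's argument: assume $L^1$-boundedness, pass to the adjoint on $L^\infty$, apply it to $\chi_U$, and combine Lemma~\ref{lem:adjoint-lower} with \eqref{eq:kappa-identity} to get a lower bound of order $|\eta|^{-2(nl+m-1)/m}$ that blows up as $\eta\to0$. Your Tonelli justification of the duality step and your use of the open sets $S_r$ of positive measure tighten the same argument; the paper instead lets $\eta\to0$ along the measure-zero family $\xi=(c_1|\eta|^{l/m},0,\ldots,0)$, which alone does not contradict essential boundedness.
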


\begin{proof}
If $\calB_{m/l,n}$ were bounded on $L^1(\Omega^{n+1}_{m/l})$, then its Banach-space adjoint
would be bounded on $L^\infty(\Omega^{n+1}_{m/l})$.  Let $U$ be the Reinhardt set from
Lemma~\ref{lem:adjoint-lower}.  Since $\chi_U\in L^\infty(\Omega^{n+1}_{m/l})$, boundedness would
imply
\[
   \calB_{m/l,n}^*\chi_U\in L^\infty(\Omega^{n+1}_{m/l}).
\]
But Lemma~\ref{lem:adjoint-lower} gives
\[
   \calB_{m/l,n}^*\chi_U(\xi,
\eta)
   \ge c |\xi_1|^{2j_0}|\eta|^{-2(1+E_0)}.
\]
Take points with
\[
   \xi=(c_1 |\eta|^{l/m},0,
\ldots,0),
   \qquad 0<c_1<1,
\]
and let $\eta\to0$.  Then
\[
   |\xi_1|^{2j_0}|\eta|^{-2(1+E_0)}
   =c_1^{2j_0}|\eta|^{\frac{2lj_0}{m}-2(1+E_0)}.
\]
By \eqref{eq:kappa-identity},
\[
   2(1+E_0)-\frac{2lj_0}{m}=\frac{2(m+nl-1)}{m}>0.
\]
Therefore the right-hand side tends to infinity as $\eta\to0$, contradicting
$L^\infty$ boundedness of the adjoint.  Hence $\calB_{m/l,n}$ is not bounded on
$L^1(\Omega^{n+1}_{m/l})$.
\end{proof}

\begin{proof}[\textbf{Proof of  Theorem \ref{thm:main}}]
The sufficiency is Proposition~\ref{prop:sufficiency}.  The necessity for $1<p<\infty$ is
Proposition~\ref{prop:necessity}.  The endpoint $p=1$ is covered by
Proposition~\ref{prop:endpoint}.  This completes the proof.
\end{proof}
\section*{Acknowledgements}
The author would like to thank Professor Guantie Deng for his many valuable and constructive suggestions which greatly improved the presentation of the paper.

\section*{Declarations}
\textbf{Data availability.} This manuscript has no associated data.\par 
\textbf{Conflict of interest.} The author declares no competing interests.

\end{document}